\theoremstyle{plain}
\newtheorem{theorem}{Theorem}[section]
\newtheorem{cor}[theorem]{Corollary}
\newtheorem{lem}[theorem]{Lemma}
\newtheorem{prop}[theorem]{Proposition}
\newcounter{kludge}
\newcounter{kludgeb}
\theoremstyle{definition}
\newtheorem{defn}[theorem]{Definition}
\newtheorem{rmk}[theorem]{Remark}
\newtheorem{notat}[theorem]{Notation}
\newtheorem{ex}[theorem]{Example}
\theoremstyle{remark}
\newcommand{\marpar}[1]{}
\newcommand{\mni}{\medskip\noindent}
\newcommand{\mbb}{\mathbb}
\newcommand{\QQ}{\mbb{Q}}
\newcommand{\ZZ}{\mbb{Z}}
\newcommand{\PP}{\mbb{P}}
\newcommand{\mc}{\mathcal}
\newcommand{\mcC}{\mc{C}}
\newcommand{\mcL}{\mc{L}}
\newcommand{\OO}{\mc{O}}
\newcommand{\ul}{\underline}
\newcommand{\SP}{\text{Spec }}
\newcommand{\Pic}[2]{\text{Pic}^{#1}_{#2}}
\newcommand{\Gm}[1]{\mathbb{G}_{#1}}
\newcommand{\pr}[1]{\text{pr}_{#1}}
\newcommand{\Xx}{X}
\newcommand{\FR}{K}
\newcommand{\sS}{\mathsf{S}}
\newcommand{\cb}{b}
\newsavebox{\sembox}
\newlength{\semwidth}
\newlength{\boxwidth}
\newsavebox{\semrbox}
\newlength{\semrwidth}
\newlength{\boxrwidth}
\title
{Picard Schemes of Acyclic Schemes} 
\author[Starr]{Jason Michael Starr}
\address{Department of Mathematics \\
  Stony Brook University \\ Stony Brook, NY 11794}
\email{jstarr@math.stonybrook.edu} 
\date{\today}
\begin{document}


\begin{abstract}
In his work extending rational simple connectedness to schemes with
higher Picard rank, Yi Zhu introduced hypotheses
for schemes insuring that the relative Picard functor is representable
and is \'{e}tale locally constant with finite free stalks.    
We give examples showing that one cannot eliminate any of the
hypotheses and still have a representable Picard functor that is
locally constant with finite free stalks.  We also prove that the
hypotheses are compatible with composition and with hyperplane sections.
\end{abstract}


\maketitle



\section{Acyclic Schemes} \label{sec-acyc}
\marpar{sec-acyc}

\mni
The \emph{acyclic schemes} have relative Picard functors that are
representable and that are \'{e}tale locally constant with stalks
being finite free Abelian groups.  This class includes smooth,
rationally connected varieties in characteristic $0$, as well as
mildly singular specializations of these schemes.  For this class of
schemes, the \emph{Abel maps} of \cite{dJHS} and \cite{Zhu} exist and
have good properties.  This note proves some basic properties of these
schemes.  After reviewing Zhu's theorem about the relative Picard
functor of acyclic schemes, Proposition \ref{prop-Zhu},
in the next section we present several examples showing that if any of
the hypotheses in Definition \ref{defn-Zhu} is removed, then
Proposition \ref{prop-Zhu} fails.  The compatibilities are Proposition
\ref{prop-compos}, compatibility of Definition \ref{defn-Zhu} with
composition, Corollary \ref{cor-iterate1}, compatibility of Definition
\ref{defn-Zhu} with ample hypersurfaces, Corollary \ref{cor-iterate2},
the application of Corollary \ref{cor-iterate1} to a universal family
of hypersurface sections, and Corollary \ref{cor-Lef}, the iteration
of Corollary \ref{cor-iterate2} for a universal family of complete
intersections of hypersurface sections.

\begin{defn} \label{defn-kacyclic} \marpar{defn-kacyclic}
Let $r\geq 0$ be an integer.
A projective, fppf morphism, $f:\Xx\to T$, is
\emph{$r$-acyclic for the structure sheaf} if for every $T$-scheme
$T'$ and base change morphism $f':\Xx'\to T'$, the induced morphism
$\OO_{T'} \to Rf'_*\OO_{\Xx'}$ is a quasi-isomorphism in all degrees
$\leq r$.  The morphism is \emph{$\OO$-acyclic} if it is $r$-acyclic
for every $r\geq 0$.
\end{defn}

\begin{rmk} \label{rmk-kacyclic} \marpar{rmk-kacyclic}
Every projective, fppf morphism is locally on the target the base change
of a projective, fppf morphism of Noetherian schemes.  If $T$ is
Noetherian, then $f$ is $r$-acyclic if and only if for every geometric
point $t$ of $T$, $\kappa(t)\to H^0(X_t,\OO_{X_t})$ is an isomorphism
and $h^q(X_t,\OO_{X_t})$ equals $0$ for every $0<q\leq r$
by Cohomology and Base Change, \cite[Theorem
III.12.11]{H}.
\end{rmk}

\begin{defn}\cite[Definition 2.10]{Zhu} \label{defn-Zhu} \marpar{defn-Zhu}
A projective morphism $f:X_T\to T$
is \emph{weakly acyclic},
resp. \emph{acyclic}, if 
\begin{enumerate}
\item[(i)] $f$ is fppf,
\item[(ii)] every $X_t$ is LCI and
  $\text{codim}_{X_t}(\text{Sing}(X_t))$ is $\geq 3$, resp. is $\geq 4$,
\item[(iii)] $X_t$ is $2$-acyclic for the structure sheaf, and
\item[(iv)] $X_t$ is algebraically simply connected. 
\end{enumerate}
The \emph{acyclic locus}, resp. \emph{weakly acyclic locus}, 
is the maximal open subscheme $T^o\subset T$
such that $X_T\times_T T^o\to T^o$ is acyclic, resp. weakly acyclic.
\end{defn}

\begin{prop}\cite[Proposition X.1.2]{SGA1} \label{lem-Zhu} \marpar{lem-Zhu}
For every proper, fppf morphism $f:X_T\to T$ whose geometric fibers
are reduced, the finite part of the Stein factorization is \'{e}tale
over $T$.
\end{prop}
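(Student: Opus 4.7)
The plan is to show the finite part of the Stein factorization, namely $g \colon T' := \underline{\mathrm{Spec}}_T(f_*\mathcal{O}_{X_T}) \to T$, is étale. The question being local on $T$, I would reduce to $T = \mathrm{Spec}(A)$ affine and Noetherian. By properness of $f$, $B := f_*\mathcal{O}_{X_T}$ is a finite $A$-algebra, so $g$ is finite; the task is to exhibit $B$ as finite étale over $A$, which I will do by establishing flatness, base change compatibility, and étaleness on geometric fibers.

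The geometric fibers of $g$ are étale by direct computation: for any geometric point $\bar t \to T$, the fiber $X_{\bar t}$ is proper and reduced over the algebraically closed field $\kappa(\bar t)$, so $H^0(X_{\bar t}, \mathcal{O}_{X_{\bar t}})$ is a reduced finite-dimensional $\kappa(\bar t)$-algebra, hence of the form $\kappa(\bar t)^{n(\bar t)}$ with $n(\bar t) := \#\pi_0(X_{\bar t})$, which is manifestly étale. Granting the base change compatibility for $B$ to be established below, this identifies the geometric fibers of $g$ with these étale algebras.

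The main obstacle is therefore the simultaneous verification that $B$ is $A$-flat and that its formation commutes with arbitrary base change. Both follow from Grothendieck's cohomology and base change theorem (Hartshorne III.12.11, cited in Remark \ref{rmk-kacyclic}) once the function $t \mapsto h^0(X_t, \mathcal{O}_{X_t})$ is shown to be locally constant on $T$. Upper semicontinuity is standard. For the matching lower bound near a point $s \in T$, I would pass to the strict Henselization $R := \mathcal{O}_{T,s}^{\mathrm{sh}}$, over which $H^0(X_s, \mathcal{O}_{X_s}) \otimes_{\kappa(s)} \overline{\kappa(s)}$ splits as a product of $n(s)$ copies of $\overline{\kappa(s)}$. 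These $n(s)$ primitive idempotents lift uniquely through each infinitesimal thickening $\mathrm{Spec}(R/\mathfrak{m}^{k+1})$, because idempotents lift uniquely through nilpotent immersions (they are sections of the étale cover $\mathrm{Spec}(\mathbb{Z}[t]/(t^2-t)) \to \mathrm{Spec}(\mathbb{Z})$, equivalently by a direct Hensel computation using that $2e-1$ is a unit). Formal GAGA for the proper morphism $X \to T$ then identifies $\varprojlim_k H^0(X \times_T \mathrm{Spec}(R/\mathfrak{m}^{k+1}), \mathcal{O})$ with $B \otimes_A \widehat R$, producing $n(s)$ orthogonal idempotents there; Henselianness of $R$ reflects this decomposition back to $B \otimes_A R$, so $B \otimes_A R$ splits into at least $n(s)$ factors. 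This forces $\dim_{\kappa(s')}(B \otimes_A \kappa(s')) \geq n(s)$ on an étale neighborhood of $s$, giving the reverse inequality to upper semicontinuity.

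Granted local constancy of $h^0$, Grothendieck's theorem yields the local freeness of $B$ and the desired base change compatibility, and combined with the fiber computation this exhibits $B$ as a finite étale $A$-algebra. Hence $g \colon T' \to T$ is étale, as claimed.
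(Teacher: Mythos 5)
Your overall architecture is sound and in fact close in spirit to the paper's proof: both pass to the strict Henselization $R=\mathcal{O}_{T,s}^{\mathrm{sh}}$, and your combination of unique idempotent lifting through infinitesimal thickenings, the theorem on formal functions (what you call formal GAGA), and Henselian descent of idempotents correctly splits $X\times_T\operatorname{Spec} R$ into clopen pieces matching the connected components of the closed fiber; the paper gets the same reduction via \cite[Proposition 18.5.19]{EGA4}. The genuine gap is at the step where you cash this in. You claim that local constancy of $t\mapsto h^0(X_t,\mathcal{O}_{X_t})$ plus \cite[Theorem III.12.11]{H} yields flatness of $B=f_*\mathcal{O}_{X_T}$ and base-change compatibility. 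But the numerical criterion ``constant $h^i$ implies locally free and compatible with base change'' is Grauert's theorem (Hartshorne III.12.9), which requires an \emph{integral} (at least reduced) base; III.12.11 instead requires surjectivity of the comparison map $\phi^0(t)\colon f_*\mathcal{O}\otimes\kappa(t)\to H^0(X_t,\mathcal{O}_{X_t})$, which is exactly what is at stake. Over a non-reduced base, constancy of $h^0$ is vacuous (e.g.\ over $\operatorname{Spec} k[\epsilon]$, a one-point space) and does not imply cohomological flatness in degree $0$; and you cannot reduce the proposition to reduced $T$, since flatness, hence \'etaleness, of $T'\to T$ is not detected after base change to $T_{\mathrm{red}}$. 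The observation you are missing --- and the paper's key point --- is that $\phi^0$ is a homomorphism of \emph{unital} rings: once $X\times_T\operatorname{Spec} R$ is split into pieces $X_i$ with connected, reduced closed fiber over the separably closed residue field, the composition $\kappa \to B_i\otimes_R\kappa \to H^0(X_{i,0})=\kappa$ is the identity, so $\phi^0$ is surjective at the closed point and III.12.11 applies directly, with no semicontinuity argument needed.

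There is also a slip in your lower bound itself. From the splitting $B\otimes_A R=\prod_i B_i$ you infer $\dim_{\kappa(s')}(B\otimes_A\kappa(s'))\geq n(s)$ at nearby points; this requires each $\operatorname{Spec} B_i\to\operatorname{Spec} R$ to be surjective, which does not follow from $B_i\neq 0$ (a nonzero finite $R$-algebra can be supported at the closed point) and must instead be extracted from the geometry: each clopen piece $X_i$ is flat of finite presentation over $\operatorname{Spec} R$, hence open, and proper, hence closed, and $\operatorname{Spec} R$ is connected, so $X_i$ surjects onto $\operatorname{Spec} R$. More seriously, even granted that bound, it is not ``the reverse inequality to upper semicontinuity'': the semicontinuous function is $h^0(X_{s'},\mathcal{O}_{X_{s'}})$, not $\dim_{\kappa(s')}(B\otimes_A\kappa(s'))$, and the comparison map between these two spaces is in general neither injective nor surjective (injectivity fails by a $\mathrm{Tor}_1$ obstruction over bases of dimension $\geq 2$); the two functions agree only after cohomological flatness in degree $0$ is established, which is circular. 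The correct route from your decomposition is direct: $X_{s'}=\bigsqcup_i X_{i,s'}$ with every piece nonempty by the surjectivity above, so $H^0(X_{s'},\mathcal{O}_{X_{s'}})=\prod_i H^0(X_{i,s'},\mathcal{O})$ has dimension at least $n(s)$, with no mention of $B\otimes\kappa(s')$. With these two repairs (the unitality argument replacing Grauert, and the openness argument replacing the fiber-dimension claim) your proof closes and essentially becomes the paper's; as written, the crucial flatness and base-change step rests on a theorem that does not apply over the non-reduced bases the proposition must cover.
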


\begin{proof}
By limit theorems, it suffices to prove the result when $T$ is a
Noetherian scheme (even finitely presented over $\SP \ZZ$).  The
finite part of the Stein
factorization is a finite morphism to $T$.  To prove that it is
\'{e}tale, it suffices to prove that it is formally \'{e}tale, e.g.,
it suffices to prove that it is formally \'{e}tale after base change
to the strictly Henselized local ring $\OO_{T,t}^{sh}$ for every $t$ in $T$.
The
Stein factorization is compatible with flat base change of $T$.  Thus,
without loss of generality, assume that $T$ equals $\SP \OO_{T,t}^{sh}$.
By \cite[Proposition 18.5.19]{EGA4}, it suffices to consider the case
that $X_t$ is connected.  Since $X_t$ is connected, projective, and
reduced over the algebraically closed field $\kappa(t)$, the natural
homomorphism $\kappa(t) \to H^0(X_t,\OO_{X_t})$ is an isomorphism.
Thus, the composition,
$$
\kappa(t) \xrightarrow{f^\#_t} f_*\OO_{X_T}\otimes_{\OO_T} \kappa(t)
\to H^0(X_t,\OO_{X_t}),
$$
is an isomorphism.  By Cohomology and Base Change, cf. \cite[Theorem
III.12.11]{H}, the following natural homomorphism is an isomorphism,
$$
f^\#:\OO_T \to f_*\OO_{X_T}.
$$
Thus, the Stein factorization is an isomorphism, hence it is formally
\'{e}tale.  
\end{proof}

\begin{prop}\cite[Proposition 2.9]{Zhu} \label{prop-Zhu}
  \marpar{prop-Zhu}
For every weakly acyclic morphism, and even for morphisms that become
weakly acyclic after base change by an \'{e}tale cover of $T$, 
the relative Picard functor of $X_T/T$
is representable, and it
is \'{e}tale locally constant with finite free stalks.
\end{prop}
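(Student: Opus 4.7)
The plan is to reduce to the strictly weakly acyclic case and then run a classical Grothendieck-style argument: produce $\text{Pic}_{X_T/T}$ as a representable scheme, verify that it is \'{e}tale over $T$ by a tangent/obstruction computation powered by the $2$-acyclic hypothesis, and finally identify each geometric stalk as a finitely generated free abelian group.

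First I would reduce. Both representability of the relative Picard functor and \'{e}tale-local constancy with finite free stalks are \'{e}tale-local on $T$ (for the former, an algebraic space \'{e}tale over $T$ descends along an \'{e}tale cover and is automatically a scheme), so without loss of generality $f\colon X_T\to T$ is weakly acyclic itself. Next I would check that each geometric fiber $X_t$ is integral: being LCI, $X_t$ satisfies Serre's condition $S_k$ for every $k$, and the codimension hypothesis on the singular locus gives regularity in codimension $\leq 1$, hence normality; the $0$-acyclic part of $2$-acyclicity gives $\kappa(t)\xrightarrow{\sim}H^0(X_t,\OO_{X_t})$ so $X_t$ is connected, and connected normal implies integral. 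Grothendieck's representability theorem for the relative Picard functor of a projective, flat morphism with geometrically integral fibers then produces a representing scheme.

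Next I would prove that $\text{Pic}_{X_T/T}\to T$ is \'{e}tale. By the $2$-acyclic hypothesis applied to arbitrary base changes $f'\colon X'\to T'$, the vanishing $R^1 f'_*\OO_{X'}=0$ makes the relative tangent space at the identity section trivial, so the morphism is formally unramified; simultaneously $R^2 f'_*\OO_{X'}=0$ kills the obstruction to lifting a line bundle through a square-zero thickening, so the morphism is formally smooth. Formally smooth plus formally unramified plus locally of finite presentation yields \'{e}tale.

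Finally I would identify the geometric stalks. The identity component $\text{Pic}^0_{X_t}$ is smooth with vanishing tangent space $H^1(X_t,\OO_{X_t})=0$, hence trivial, so $\text{Pic}(X_t)=\text{NS}(X_t)$ is finitely generated by the N\'{e}ron--Severi theorem for a proper scheme over a field. For torsion-freeness I would use the Kummer (or flat-Kummer) sequence: for each prime $\ell$ the $\ell$-torsion of $\text{Pic}(X_t)$ injects into $H^1_{\text{fppf}}(X_t,\mun{\ell})$, which vanishes because $X_t$ is algebraically simply connected. An \'{e}tale $T$-group scheme with finite free geometric stalks is automatically \'{e}tale-locally constant, giving the conclusion. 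The main obstacle I anticipate is the torsion-free step at primes equal to the residue characteristic: one must confirm that Zhu's notion of algebraic simple connectedness rules out $\mun{p}$-torsors on $X_t$ (not only \'{e}tale covers), for which the normality of the fibers established in the second paragraph is indispensable.
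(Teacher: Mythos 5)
There is a genuine gap, and it sits exactly where the paper does its hardest work. Your argument establishes representability, \'{e}taleness over $T$, and finite free geometric stalks, and then concludes with the assertion that an \'{e}tale $T$-group scheme with finite free geometric stalks is automatically \'{e}tale-locally constant. That assertion is false, and the paper's own Examples \ref{ex-0}, \ref{ex-1}, and \ref{ex-2} are precise counterexamples: in each, the relative Picard functor is representable and \'{e}tale over $T$ with finite free geometric stalks (e.g.\ $\ZZ^2$ on one stratum and $\ZZ$ on another), yet it is not locally constant because it fails the valuative criterion of properness. The missing step is to show that the open and closed quasi-projective pieces of $\text{Pic}_{X_T/T}$ are \emph{proper}, hence finite \'{e}tale, over $T$. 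This is where Hypothesis (ii) of Definition \ref{defn-Zhu} earns its keep: over a DVR one takes an invertible sheaf on the generic fiber, extends it as a Cartier divisor over the (regular) smooth locus of $f$, pushes forward from $X_\eta\cup X_{T,\text{sm}}$ to get a torsion-free $\sS_2$ sheaf $\mcL$, and then uses that any point where $\mcL$ could fail to be invertible lies in the singular locus of the closed fiber, hence has codimension $\geq 3$ there and $\geq 4$ in the LCI total space, so that $\OO_{X_T,x}$ is parafactorial by \cite[Th\'{e}or\`{e}me XI.3.13]{SGA2} and $\mcL$ is invertible everywhere. In your proposal the codimension-$\geq 3$ hypothesis is used only to get normality of the fibers (where codimension $\geq 2$ would already suffice), so the one hypothesis that separates the true statement from the counterexamples is never essentially invoked; your argument, as written, would ``prove'' local constancy in Example \ref{ex-2}. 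Even after properness of the pieces, the paper still needs a final argument (images of finitely many finite \'{e}tale pieces are open and closed, $T$ Noetherian has finitely many connected components) to show the rank is locally constant; that, too, is swallowed by the false ``automatic'' step.

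On the torsion-freeness of the stalks, your caution about $p$-torsion in residue characteristic $p$ is legitimate: the Kummer argument identifies prime-to-$p$ torsion with classes killed by algebraic simple connectedness, but $\mun{p}$-torsors in characteristic $p$ are inseparable and invisible to $\pi_1^{\text{alg}}$. Note, however, that your proposed remedy via normality of the fibers cannot close this: smooth (hence normal) classical Enriques surfaces in characteristic $2$ are $2$-acyclic and algebraically simply connected, yet carry a $2$-torsion canonical class. The paper's proof of Proposition \ref{prop-Zhu} is itself terse here --- it simply asserts torsion-freeness from Hypothesis (iv) after quoting the theorem of the base \cite[Th\'{e}or\`{e}me XIII.5.1]{SGA6} --- so on this point you are no worse off than the paper; but the decisive, non-negotiable defect in your proposal remains the omitted properness argument in the first paragraph above.
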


\begin{proof} 
This is a review of the proof in \cite{Zhu}.  By limit theorems, it
suffices to prove the result when $T$ is a Noetherian scheme (even finitely
presented over $\SP \ZZ$) and $f$ is weakly acyclic.  By Proposition
\ref{lem-Zhu}, the finite part $T'$ of the Stein factorization of $f$ is
finite and \'{e}tale over $T$.  The relative Picard functor of $X_T/T$
is the restriction of scalars relative to $T'/T$ of the relative
Picard functor of $X_T/T'$.  Thus, it suffices to prove the result for
$X_T/T'$.  Thus, without loss of generality, assume that the geometric
fibers of $f$ are connected.  By Hypothesis (ii), the geometric fibers
are integral.

\mni
Because $f$ is
projective and flat with integral geometric fibers, 
the relative Picard functor is
representable and equals a union of open and closed
subschemes that are quasi-projective over $T$, 
cf. \cite[Theorem 3.1, no. 232-06]{FGA}.  
By Hypothesis (iii), the
relative Picard functor is formally unramified and formally smooth
over $T$.  Thus, it is formally \'{e}tale over $T$.  Since the Picard functor
is representable and locally finitely presented over $T$, it is
\'{e}tale over $T$.

\mni
Since the open and closed quasi-projective schemes are \'{e}tale over
$T$, they are finite over $T$ if and only if they are proper over $T$.
To prove properness,
it suffices to verify the valuative criterion of
properness.  Thus, assume that $T$ is $\SP \OO_T$ for a DVR $\OO_T$.  Let
$\mcL_\eta$ be an invertible sheaf on the generic fiber $X_\eta$ of
$f$.  
Denote by
$X_{T,\text{sm}}\subset X_T$ the smooth locus of $f$.  Since $X_\eta$
is projective, $\mcL_\eta$ comes from a Cartier divisor $D$ on $X_\eta$.
Since $X_{T,\text{sm}}$ is regular, the Cartier divisor $D$ on
$X_\eta\cap X_{T,\text{sm}}$ extends to a Cartier divisor on
$X_{T,\text{sm}}$.  Thus, the invertible sheaf extends to an
invertible sheaf on the open subscheme $U=X_\eta \cup
X_{T,\text{sm}}$.  Since $X_T$ is normal, the pushforward of this
invertible sheaf
from $U$ is a torsion-free coherent sheaf $\mcL$ that is $\sS_2$.  
Denote by
$V\subset X_T$ the maximal open subscheme on which $\mcL$ has rank
$\leq 1$.  

\mni
For a generic point $x\in X_T$ of the complement of $V$,
the stalk of $\mcL$ at $x$ has rank $\geq 2$.  
Since $x$ is
in the closed fiber and in the complement of the smooth locus, $x$ has
codimension $\geq 3$ in the closed fiber, hence codimension $\geq 4$
in $X_T$.  Since $X_T$ is a local complete intersection scheme, by 
\cite[Th\'{e}or\`{e}me XI.3.13]{SGA2}, the local ring
$\OO_{X_T,x}$ is parafactorial.  Thus, the stalk of $\mcL$ at $x$ is
locally free of rank $1$.  This contradiction proves that $V$ is all
of $X_T$, i.e., $\mcL$ is an invertible sheaf on $X_T$.  Therefore, by
the valuative criterion of properness, for every Noetherian scheme $T$
and for every fppf projective morphism $f:X_T\to T$ satisfying
Hypotheses (i)-(iv), the relative Picard function is representable and
equals a union of open and closed subschemes, each of which is finite,
\'{e}tale over $T$.

\mni
For every point $t$ of $T$, the geometric Picard group of $X_t$ is
finitely generated 
by the theorem of the base, \cite[Th\'{e}or\`{e}me XIII.5.1]{SGA6}.
By Hypothesis (iv), the geometric Picard group is torsion-free.  Thus,
it is finite free of some rank $r\geq 1$.  Define $T_{r}$,
resp. $T_{\geq r}$, 
to be the subset of $T$ over which the geometric
Picard group is finite free of rank $r$, resp. of rank $\geq r$.

\mni
Let $r_0$ be an integer, and let $t\in T_{\geq r_0}$ be a point of
rank $r\geq r_0$.
The \'{e}tale stalk at $t$ of the Picard functor 
is generated by the images of
finitely many of the finite, \'{e}tale, open and closed subschemes of
the relative Picard scheme.  The image under $f$ of each of these is
an open and closed subscheme of $T$ that contains $t$.  The
intersection of these finitely many open and closed subscheme of $T$
is an open and closed subscheme of $T$ that contains $t$.  For every
geometric point of this open and closed subscheme, the rank is $\geq
r$.  In particular, the rank is $\geq r_0$.  
Thus, each subset $T_{\geq r_0}\subset T$ is open, and it is a union
of open subsets that are both open and closed.  

\mni
Since $T$ is
Noetherian, there are only finitely many irreducible components.
Thus, there are 
also finitely many connected components.  The subset $T_r$
contains the unique connected component of $T$ that contains $t$.
Thus, also every subset $T_r$ is an open subset of $T$.  The
restriction of the relative Picard functor over $T_r$ is \'{e}tale
locally constant with finite free stalks of rank $r$.
\end{proof}

\mni
\textbf{Acknowledgments.}  This is part of a project begun with
Chenyang Xu for extending theorems about rational simple
connectedness; I am grateful to Xu for all his help.  I am also
grateful to Yi Zhu for many discussions about his work.  I am grateful
to Aise Johan de Jong for help with references.  I was supported
by NSF Grants DMS-0846972 and DMS-1405709, as well as a Simons
Foundation Fellowship.


\section{Examples and Composition} \label{sec-compos}
\marpar{sec-compos} 

\begin{ex} \label{ex-0} \marpar{ex-0}
Let $Q\subset \PP^3$ be a smooth quadric surface.
For $T$ equal to $\mathbb{A}^1$, for $X_T$ the reduced closed
subscheme of $T\times \PP^3$ whose intersection with $\Gm{m}\times
\PP^3$ equals $\Gm{m}\times Q$ and whose fiber over $0\in T$ equals
all of $\PP^3$, then $f$ satisfies Hypotheses (ii), (iii), and (iv),
yet the morphism is not flat.  The relative Picard functor is
representable and \'{e}tale over $T$, but it fails the valuative
criterion of properness.
\end{ex}

\begin{ex} \label{ex-1} \marpar{ex-1}
For every integer $r\geq 2$, for
$T$ equal to $\mathbb{A}^1$, and for $X_T$ a specialization of the
image of the Segre embedding, $\sigma:\PP^r\times \PP^r\to \PP^{r^2+2r},$ to a
cone over a smooth hyperplane section of $\sigma(\PP^2\times \PP^2)$,
Hypotheses (i), (iii), and (iv) are
satisfied, and the fibers are smooth in codimension $\leq 2$, yet the
fibers are not local complete intersections, and the relative Picard
scheme is not \'{e}tale locally constant.  More precisely, the
relative Picard scheme is separated and \'{e}tale over $T$, 
but it fails the valuative
criterion of properness.  
\end{ex}

\begin{ex} \label{ex-2} \marpar{ex-2}
For $T$ equal to $\mathbb{A}^1$ and for $X_T$ a specialization in
$\PP^3$ of a smooth quadric hypersurface to a quadric hypersurface
with an ordinary double point, Hypothesis (i), (iii), and (iv) are
satisfied, and the fibers are local complete intersections, yet the
special fiber is singular at a point of codimension $2$.  The relative
Picard scheme is not proper over $T$.
\end{ex}

\begin{ex} \label{ex-3} \marpar{ex-3}
For a family of 
supersingular Enriques surfaces over a smooth scheme
$T$ in characteristic $2$, Hypotheses (i), (ii),
and (iv) are satisfied, yet Hypothesis (iii) fails.  The relative
Picard functor is representable and \'{e}tale locally constant over $T$.  Yet
the relative Picard functor is not smooth over $T$: the connected
component of the identity is $\alpha_2$.  
\end{ex}

\begin{ex} \label{ex-4} \marpar{ex-4}
For a family of Enriques surfaces over a smooth scheme $T$ in
characteristic $0$, Hypotheses (i), (ii), and (iii) are satisfied, yet
Hypothesis (iv) fails.  The relative Picard functor is representable
and \'{e}tale locally constant over $T$.  Yet the stalks have
$\ZZ/2\ZZ$-torsion.  
\end{ex}

\begin{prop} \label{prop-compos} \marpar{prop-compos}
Let $g:Y\to X$ and $f:X\to T$ be projective, fppf morphisms whose
geometric fibers are integral.  The composition $f\circ g$ is a
projective, fppf morphism whose geometric fibers are integral.
If both $g:Y\to X$ and $f:X\to T$ are $r$-acyclic, resp.
acyclic, weakly acyclic,
then so is the composition $f\circ g:Y\to T$.
\end{prop}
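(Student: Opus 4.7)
The plan is to verify hypotheses (i)--(iv) of Definition \ref{defn-Zhu} (and the $r$-acyclic condition of Definition \ref{defn-kacyclic}) for $f\circ g$ one at a time. That $f\circ g$ is projective and fppf is immediate from the corresponding properties of $f$ and $g$, and for each geometric point $\bar t$ of $T$ the fiber $Y_{\bar t}$ is integral because $Y_{\bar t}\to X_{\bar t}$ is a projective flat morphism onto the integral scheme $X_{\bar t}$ whose fibers (those of $g$) are geometrically integral.

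For the $r$-acyclic condition, I would apply the Grothendieck spectral sequence
$$E_2^{p,q} \;=\; R^pf'_*R^qg'_*\OO_{Y'} \;\Longrightarrow\; R^{p+q}(f'\circ g')_*\OO_{Y'}$$
after an arbitrary base change $T'\to T$. By definition $r$-acyclicity passes to the base changes $g':Y'\to X'$ and $f':X'\to T'$, so $g'_*\OO_{Y'}=\OO_{X'}$, $R^qg'_*\OO_{Y'}=0$ for $0<q\leq r$, and analogous vanishings hold for $f'$. In total degree $n$ with $0<n\leq r$, every term on the $E_2$-page with $p+q=n$ vanishes---by $r$-acyclicity of $g'$ when $q>0$, and by $r$-acyclicity of $f'$ when $q=0$. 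Hence $R^n(f'\circ g')_*\OO_{Y'}=0$ and $(f'\circ g')_*\OO_{Y'}=\OO_{T'}$. The same computation applied fiberwise to $g_t:Y_t\to X_t$, together with Remark \ref{rmk-kacyclic}, also verifies hypothesis (iii) of Definition \ref{defn-Zhu}.

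For hypothesis (ii), the fiber $Y_t$ is LCI over $\kappa(t)$ as the composition of $g_t:Y_t\to X_t$ (a flat morphism with LCI fibers is LCI) and $X_t\to\SP\kappa(t)$. For the singular locus, flatness of $g_t$ implies $Y_t$ is smooth at $y$ whenever $X_t$ is smooth at $g_t(y)$ and the fiber $Y_{g_t(y)}$ is smooth at $y$; thus
$$\mathrm{Sing}(Y_t) \;\subseteq\; g_t^{-1}(\mathrm{Sing}(X_t)) \;\cup\; \mathrm{Sing}(g_t),$$
where $\mathrm{Sing}(g_t)$ denotes the non-smooth locus of $g_t$. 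Flatness of $g_t$ preserves codimension, so $g_t^{-1}(\mathrm{Sing}(X_t))$ has codimension $\geq 3$ (resp.\ $\geq 4$) in $Y_t$; and a fiber-dimension bound gives
$$\dim\mathrm{Sing}(g_t) \;\leq\; \dim X_t+\max_{x\in X_t}\dim\mathrm{Sing}(Y_x) \;\leq\; \dim Y_t - c,$$
with $c=3$ in the weakly acyclic case and $c=4$ in the acyclic case.

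Finally, for (iv), the algebraic simple connectedness of $Y_t$ follows from the fundamental exact sequence for fundamental groups in families, applied to the proper surjection $g_t:Y_t\to X_t$ with geometrically integral fibers: by \cite[Exp.\ IX]{SGA1}, one obtains
$$\pi_1(Y_{\bar x})\;\longrightarrow\;\pi_1(Y_t)\;\longrightarrow\;\pi_1(X_t)\;\longrightarrow\;1$$
for any geometric point $\bar x$ of $X_t$, and both flanking groups are trivial by hypothesis. The main subtlety I expect lies in the codimension analysis of (ii): one must verify that the \emph{fiberwise} singular loci of $g$ over points of $X_t$ assemble, uniformly in $x$, into a single closed subscheme $\mathrm{Sing}(g_t)\subseteq Y_t$ of the advertised codimension, which is ultimately controlled by flatness of $g_t$ together with the equidimensionality of the fibers of a flat projective morphism.
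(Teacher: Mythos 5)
Your proposal is correct and follows essentially the same route as the paper's proof: the paper likewise verifies integrality of the composed fibers, treats hypothesis (ii) via the containment of $\mathrm{Sing}(Y_t)$ in the union of $g_t^{-1}(\mathrm{Sing}(X_t))$ (flatness preserving codimension) and the non-smooth locus of $g_t$, invokes the SGA1 result on fundamental groups for (iv), and settles acyclicity with the same Leray spectral sequence (applied fiberwise as $E_2^{p,q}=H^p(X_t,R^q(g_t)_*\OO_{Y_t})$ rather than in your relative form $R^pf'_*R^qg'_*\OO_{Y'}$, an immaterial difference). The "subtlety" you flag at the end is exactly resolved as you suggest, by the fiberwise criterion for smoothness of the flat morphism $g_t$, which is how the paper phrases the bound on the singular locus of the morphism.
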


\begin{proof}
By limit theorems, it suffices to prove the case when $T$ is
Noetherian.  A composition of projective, fppf morphisms is a
projective, fppf morphism.  For each geometric point $t$ of $T$, the
fiber $X_t$ of $f$ is integral.  Denote by $\eta$ the generic point.  
The morphism $g_t:Y_t\to X_t$ is
projective and flat.  Thus, for every nonempty open affine $U\subset
Y_t$, $U$ intersects the generic fiber $Y_{t,\eta}=g_t^{-1}(\eta)$.  
Since $\OO_{Y_t}(U)$ is
$\OO_T$-flat, the induced morphism $\OO_{Y_t}(U)\to
\OO_{Y_{t,\eta}}(U\cap Y_{t,\eta})$ is injective.  Since the geometric
fibers of $g$ are integral, the fiber $Y_{t,\eta}$ is integral.  Since
$\OO_{Y_t}(U)$ is a subring of an integral domain, also $\OO_{Y_t}(U)$
is an integral domain.  Therefore $Y_t$ is integral.  So the geometric
fibers of $f\circ g$ are integral.

\mni
A composition of flat, LCI morphisms is a flat, LCI morphism, cf. the
proof of \cite[Proposition 6.6(c)]{F} (Fulton works with global
embeddings in smooth schemes, but the diagram in the proof also proves
the result in the local case).  With notation as in the previous
paragraph, if $\text{Sing}(X_t)$ has codimension $\geq c$ in $X_t$,
then also $g_t^{-1}(\text{Sing}(X_t))$ has codimension $\geq c$ in
$Y_t$, since $g_t$ is flat.  If the singular locus of the morphism
$g_t$ has codimension $\geq c$ in every fiber of $g_t$, then it has
codimension $\geq c$ in $Y_t$.  Then the union of the singular locus
of $g_t$ and $g^{-1}(\text{Sing}(X_t))$ has codimension $\geq c$ in
$Y_t$.  On the open complement of this union, $f\circ g$ is a
composition of smooth morphisms, hence it is smooth.  Thus, the
singular locus of $Y_t$ is contained in this union, so that the
singular locus of $Y_t$ has codimension $\geq c$ in $Y_t$.  Finally,
if the geometric fibers of $g_t$ are algebraically simply connected,
and if $X_t$ is algebraically simply connected, then also $Y_t$ is
algebraically simply connected, cf. \cite[Corollaire IX.6.11]{SGA1}.

\mni
Thus, to prove that $f\circ g$ is acyclic, resp. weakly acyclic, it
suffices to prove that it is $2$-acyclic for the structure sheaf.  For
projective, fppf morphisms $f$ and $g$ that are $r$-acyclic, consider
the Leray spectral sequence,
$$
E^{p,q}_2 = H^p(X_t,R^q(g_t)_* \OO_{Y_t}) \Rightarrow H^{p+q}(Y_t,\OO_{Y_t}).
$$
Since $g$ is $r$-acyclic, and since $g_t$ is a base change of $g$,
also $g_t$ is $r$-acyclic.  Thus, $(g_t)_*\OO_{Y_t}$ equals
$\OO_{X_t}$, and $R^q(g_t)_*\OO_{Y_t}$ is the zero sheaf for $0<q\leq
r$.  Thus, for every integer $s$ with $0\leq s \leq r$, the only
nonzero terms in the spectral sequence with $p+q=s$ are when $q$
equals $0$ and $p$ equals $s$, i.e., $E^{s,0}_2=H^s(X_t,\OO_{X_t})$.
Since $f$ is $r$-acyclic, this equals $0$ unless $s=0$, in which case
it equals $H^0(X_t,\OO_{X_t}) = \kappa(t)$.  Thus,
$H^s(Y_t,\OO_{Y_t})$ equals $0$ for $0<s\leq r$, and the natural map
$\kappa(t) \to H^0(Y_t,\OO_{Y_t})$ is an isomorphism.  So $f\circ g$
is also $r$-acyclic for the structure sheaf.
\end{proof}


\section{Hyperplane Theorems} \label{sec-hyperplane}
\marpar{sec-hyperplane} 

\mni
The following lemma in characteristic zero follows by the
Kawamata-Viehweg Vanishing Theorem.

\begin{lem} \label{lem-KV} \marpar{lem-KV}
Let $\FR$ be a field.  
Let $f:\Xx\to T$ be a proper, fppf morphism of finite type
$\FR$-schemes of relative dimension $n$. 
Let $Y\subset \Xx$ be an effective Cartier divisor that
is $T$-flat and $f$-ample.  
If $\Xx$ is smooth over $\FR$, then $T$ is smooth over $\FR$, and
every fiber of $f$ is LCI.  If, moreover, $\text{char}(\FR)$ equals
$0$, and 
if $n\geq r+2$, then
$H^q(\Xx_t,\OO_{\Xx_t}(-\ul{Y}_t))$ is zero for every
$q=0,\dots,r+1$.  
Thus, if $f$ is $r$-acyclic for the
structure sheaf,
then also $f|_Y:Y\to
T$ is $r$-acyclic for the structure sheaf.
\end{lem}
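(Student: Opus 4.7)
The plan is to prove each clause of the lemma in turn. For smoothness of $T$, I would apply fppf descent of smoothness: since $f$ is faithfully flat of finite presentation and $X \to \SP \FR$ is smooth, the morphism $T \to \SP \FR$ is itself smooth. The LCI property of fibers then follows because $f$ is a flat morphism between smooth $\FR$-schemes, hence LCI; explicitly, at a point $x$ of $X$ with $t = f(x)$, the maximal ideal of $\OO_{T,t}$ is generated by a regular sequence of parameters (by regularity of $\OO_{T,t}$), which by flatness pulls back to a regular sequence in $\OO_{X,x}$ that cuts out $X_t$ locally in the regular ring $\OO_{X,x}$.

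For the cohomological vanishing in characteristic zero, I would invoke relative Kawamata-Viehweg vanishing (Koll\'{a}r's form): since $X$ is smooth and $Y$ is $f$-ample,
$$
R^q f_*(\omega_X \otimes \OO_X(Y)) = 0 \text{ for } q > 0.
$$
Rewriting $\omega_X = \omega_{X/T} \otimes f^*\omega_T$ and applying the projection formula with the invertible $\omega_T$ gives $R^q f_*(\omega_{X/T}(Y)) = 0$ for $q > 0$. Because $f$ is LCI, $\omega_{X/T}$ is invertible, base-change compatible, and restricts to the dualizing sheaf $\omega_{X_t}$ on each fiber. Derived cohomology and base change for the $T$-flat line bundle $\omega_{X/T}(Y)$ then yields $H^q(X_t, \omega_{X_t}(Y_t)) = 0$ for $q > 0$, since the left side computes $\mathrm{Tor}_{-q}^{\OO_T}(f_*(\omega_{X/T}(Y)), \kappa(t))$ of a sheaf concentrated in degree zero, which vanishes for $q > 0$. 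Serre duality on the Gorenstein (LCI) fiber $X_t$ converts this to $H^p(X_t, \OO_{X_t}(-Y_t)) = 0$ for $0 \leq p < n$, a range that covers $0 \leq p \leq r+1$ thanks to the hypothesis $n \geq r+2$.

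For the last clause, I would pass to a geometric fiber and use the ideal sheaf sequence
$$
0 \to \OO_{X_t}(-Y_t) \to \OO_{X_t} \to \OO_{Y_t} \to 0,
$$
then take the long exact sequence in cohomology. Combining the $r$-acyclicity of $X_t$ with the vanishing just established produces $\kappa(t) \xrightarrow{\sim} H^0(Y_t, \OO_{Y_t})$ and $H^q(Y_t, \OO_{Y_t}) = 0$ for $0 < q \leq r$, which by Remark \ref{rmk-kacyclic} is exactly $r$-acyclicity of $f|_Y$. The main obstacle in this plan will be the base-change step converting the relative vanishing $R^q f_* = 0$ into the fiberwise vanishing $H^q(X_t, -) = 0$; both the $T$-flatness of the pushed-forward sheaf and the base-change compatibility of $\omega_{X/T}$ are essential here, and both ultimately rest on the LCI hypothesis on $f$ established at the outset.
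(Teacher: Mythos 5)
Your proposal is correct and follows essentially the same route as the paper's proof: descent of smoothness along the fppf morphism $f$, LCI fibers from flatness over a regular base, relative Kawamata--Viehweg vanishing combined with the identity $\omega_{\Xx/T}\cong \omega_{\Xx/\FR}\otimes f^*\omega_{T/\FR}^\vee$, passage to fibers, Serre duality on the Gorenstein fibers, and the long exact sequence of the ideal sheaf sequence. The only (cosmetic) difference is that you phrase the base-change step in derived-category language with $\mathrm{Tor}$ vanishing, where the paper simply cites Cohomology and Base Change \cite[Theorem III.12.11]{H}, both there and again at the end in place of your appeal to Remark \ref{rmk-kacyclic}.
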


\begin{proof}
Since $\Xx$ is $\FR$-smooth and since $f$ is flat, also $T$ is
$\FR$-smooth, 
\cite[Proposition 17.7.7]{EGA4}.  For a flat morphism from an LCI
scheme to a regular scheme, every fiber is LCI.  In particular, every
fiber is Gorenstein.

\mni  
The relative dualizing sheaf of $f$ is 
$$
\omega_{\Xx/T} \cong \omega_{\Xx/\FR}\otimes_{\OO_\Xx} f^*
\omega_{T/\FR}^\vee.  
$$
The dualizing sheaf of each fiber is the
restriction of $\omega_{\Xx/T}$.     

\mni
Now assume that $\text{char}(\FR)$ equals $0$, and assume that $\Xx$
is smooth over $\FR$.
By the Kawamata-Viehweg Vanishing Theorem, \cite[Theorem
1.2.3, p. 306]{KaMaMa}, for every $q>0$, $R^qf_*
\omega_{\Xx/T}(\ul{Y})$ is zero.  Thus, for every geometric point $t$
of $T$, for every $q>0$, $H^q(\Xx_t,\omega_{\Xx_t}(\ul{Y}_t))$ is
zero by Cohomology and Base Change, 
\cite[Theorem III.12.11]{H}.  By Serre duality, also
$H^q(\Xx_t, \OO_{\Xx_t}(-\ul{Y}_t)$ is zero for every $q < n$.  

\mni
For the short exact sequence
$$
0\to \OO_{\Xx_t}(-\ul{Y}_t) \to \OO_{\Xx_t} \to \OO_{Y_t} \to 0
$$
the long exact sequence of cohomology gives
$$
H^q(\Xx_t,\OO_{\Xx_t}(-\ul{Y}_t)) \to H^q(\Xx_t,\OO_{\Xx_t}) \to
H^q(Y_t,\OO_{Y_t}) \to H^{q-1}(\Xx_t,\OO_{\Xx_t}(-\ul{Y}_t)).
$$
Thus, for every $q\leq n-2$, the restriction map is an isomorphism,
$$
H^q(\Xx_t,\OO_{\Xx_t}) \xrightarrow{\cong} H^q(Y_t,\OO_{Y_t}).
$$
Since $r\leq n-2$, also $H^q(Y_t,\OO_{Y_t})$ is zero for
$q=1,\dots,r$.  Also, the composition
$$
\OO_T\otimes_{\OO_T} \kappa(t) \to f_*\OO_{\Xx}\otimes_{\OO_T}
\kappa(t) 
\to f_*\OO_Y\otimes_{\OO_T}\kappa(t) \to H^0(Y_t,\OO_{Y_t}) 
$$
is an isomorphism.  Thus, once again using Cohomology and Base Change,
for arbitrary $T'$, also $R^qf'_*\OO_{Y'}$ is zero for $q=1,\dots,r$,
the natural map $\OO_{T'} \to f'_*\OO_{Y'}$ is an isomorphism.
\end{proof}

\begin{prop}\cite{SGA2} \label{prop-iterate1} \marpar{prop-iterate1}
Let $f:\Xx\to T$ be a proper, fppf morphism of Noetherian schemes of
pure relative dimension $n$.
Let $Y\subset \Xx$ be an effective Cartier divisor that is $T$-flat
and $f$-ample.  For every geometric point $t$ of $T$, denote $X_t$,
resp. $Y_t$, the corresponding fiber of $X$, resp. $Y$.
\begin{enumerate}
\item[(i)] If $n\geq 2$, if $X_t$ is integral and satisfies Serre's
  condition $\sS_3$,
and if $\text{codim}_{Y_t}(\text{Sing}(Y_t)) \geq 2$, then $Y_t$ is
integral and normal.
\item[(ii)] If $n\geq 3$ and if $X_t$ is LCI with
  $\text{codim}_{X_t}(\text{Sing}(X_t)) \geq 3$, then
  $\pi_1^{\text{alg}}(Y_t)\to \pi_1^{\text{alg}}(X_t)$ is an
  isomorphism.
\item[(iii)] If $n\geq 4$, if $T$ is a finite type scheme over a
  characteristic $0$ 
  field $\FR$, if $\Xx$ is smooth over $\FR$, and if
  $\text{codim}_{X_t}(\text{Sing}(X_t)) \geq 4$, then
  $\text{Pic}(X_t)\to \text{Pic}(Y_t)$ is an isomorphism.  
\end{enumerate}
\end{prop}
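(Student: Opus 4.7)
The three parts are Lefschetz-type hyperplane theorems, and the plan is to deduce each from the corresponding result in \cite{SGA2}, with Lemma \ref{lem-KV} providing an additional input for (iii).

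For part (i), since $Y_t \hookrightarrow X_t$ is a regular immersion of codimension one and $X_t$ satisfies $\sS_3$, the standard depth argument (killing a nonzerodivisor drops depth by exactly one) shows $Y_t$ satisfies $\sS_2$. The hypothesis $\text{codim}_{Y_t}(\text{Sing}(Y_t))\geq 2$ is Serre's condition $R_1$, so by Serre's criterion $Y_t$ is normal, hence a disjoint union of integral normal schemes. Connectedness of $Y_t$ then follows from the classical Lefschetz connectedness theorem for ample Cartier divisors on projective integral schemes of dimension $\geq 2$ (\cite[Expos\'{e} XIII]{SGA2}), which applies because $X_t$ is integral with $n\geq 2$ and $Y_t$ is $f$-ample.

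For part (ii), the hypotheses are precisely those of the Lefschetz theorem for the \'{e}tale fundamental group \cite[Expos\'{e} XII]{SGA2}: an ample effective Cartier divisor on a projective LCI scheme of dimension $n\geq 3$ whose singular locus has codimension $\geq 3$. Applying that theorem directly yields the claimed isomorphism $\pi_1^{\text{alg}}(Y_t)\xrightarrow{\cong}\pi_1^{\text{alg}}(X_t)$.

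For part (iii), I plan to apply the effective Lefschetz theorem for Picard groups of \cite[Expos\'{e} XII]{SGA2}, which factors through the Picard group of the formal completion along $Y_t$ and requires two inputs: (a) parafactoriality of the strictly Henselized local rings $\OO_{X_t,x}^{\text{sh}}$ at closed points $x\in Y_t\cap\text{Sing}(X_t)$, ensuring that $\text{Pic}(X_t)$ injects into the formal Picard group; and (b) the vanishing $H^q(X_t,\OO_{X_t}(-m\ul{Y}_t)) = 0$ for $q=1,2$ and all $m\geq 1$, ensuring that the formal Picard group agrees with $\text{Pic}(Y_t)$. For (a), the LCI hypothesis combined with $\text{codim}_{X_t}(\text{Sing}(X_t))\geq 4$ gives depth $\geq 4$ at each such point, so parafactoriality follows from \cite[Th\'{e}or\`{e}me XI.3.13]{SGA2}. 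For (b), the characteristic zero and smoothness hypotheses on $\Xx$ allow Kawamata--Viehweg vanishing to apply to $\omega_{\Xx/T}(m\ul{Y})$, and Serre duality on the Gorenstein fibers (as in Lemma \ref{lem-KV}) transfers this to vanishing of $H^q(X_t,\OO_{X_t}(-m\ul{Y}_t))$ for $q\leq n-2$; since $n\geq 4$, the range $q\leq 2$ is covered. The main obstacle is (iii): each ingredient is standard on its own, but packaging the parafactoriality input and the cohomological vanishing through the Lef/Leff formalism of \cite[Expos\'{e} VIII]{SGA2}, in order to pass from $\text{Pic}(X_t)$ to the Picard group of the formal neighborhood of $Y_t$ and then down to $\text{Pic}(Y_t)$, is where the numerical hypotheses on $n$ and on the codimension of the singular locus have been tuned to exactly what is required.
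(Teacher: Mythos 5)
Your proposal is correct and takes essentially the same route as the paper: part (i) via $\sS_3$ descending to $\sS_2$, Serre's criterion, and the SGA2 connectedness theorem; part (ii) via the SGA2 Lefschetz theorem for $\pi_1^{\text{alg}}$ (the paper additionally records the needed depth/purity verification via \cite[Th\'{e}or\`{e}me X.3.4(ii)]{SGA2}); and part (iii) via exactly the paper's two inputs, namely the vanishing $H^q(X_t,\OO_{X_t}(-m\ul{Y}_t))=0$ for $q=1,2$ from Lemma \ref{lem-KV} and parafactoriality from \cite[Th\'{e}or\`{e}me XI.3.13]{SGA2}, fed into the Lefschetz theorem for Picard groups \cite[Corollaire XII.3.6]{SGA2}. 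Your extra unpacking of the Lef/Leff formal-completion mechanism in (iii) is precisely what that cited corollary packages, so there is no substantive difference.
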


\begin{proof}
\textbf{(i)}  Since $\Xx_t$ satisfies $\sS_3$, also $Y_t$ satisfies
$\sS_2$.  Since $Y_t$ is regular at every codimension $0$ and
codimension $1$ point, $Y_t$ is normal by Serre's Criterion
\cite[Th\'{e}or\`{e}me 5.8.6]{EGA4}.
Finally, by  \cite[Corollaire XII.3.5]{SGA2}, $Y_t$ is
connected.  Thus $Y_t$ is integral.

\mni
\textbf{(ii)}
By
the Purity Theorem, \cite[Th\'{e}or\`{e}me X.3.4(ii)]{SGA2}, $\Xx_t$
is pure and of depth $\geq 3$
at every closed point.  By the
Lefschetz Hyperplane Theorem for
\'{e}tale fundamental groups, \cite[Corollaire XII.3.5]{SGA2}, the
natural homomorphism $\pi_1^{\text{alg}}(Y_t) \to
\pi_1^{\text{alg}}(\Xx_T)$ is an isomorphism.

\mni
\textbf{(iii)}
By
Lemma \ref{lem-KV}, $H^q(\Xx_t,\OO_{\Xx_t}(-d\ul{Y}_t)$ is zero for
all $d>0$ and $q=1,2$.  By Grothendieck's proof of Samuel's
Conjecture, \cite[Th\'{e}or\`{e}me XI.3.13(ii),
Corollaire XI.3.14]{SGA2}, 
the scheme $\Xx_t$ is parafactorial, and even factorial.
By the Lefschetz Hyperplane Theorem for Picard groups,
\cite[Corollaire XII.3.6]{SGA2}, the restriction on Picard
groups is an isomorphism.
\end{proof}

\begin{cor} \label{cor-iterate1} \marpar{cor-iterate1}
Let $\FR$ be a characteristic $0$ field, and let $f:\Xx\to T$ be a
proper, fppf morphism of $\FR$-schemes of pure dimension $n$.  
Let $Y\subset \Xx$ be an effective Cartier divisor that is
$T$-flat and $f$-ample.  If $n\geq 4$, if $\Xx$ is smooth over $\FR$,
if $\text{codim}_{Y_t}(\text{Sing}(Y_t)) \geq 4$ for every geometric point
$t$ of $T$,
and if $f$ is acyclic, then also $f|_Y:Y\to T$
is acyclic.  Moreover, the restriction morphism of
\'{e}tale group schemes, $\text{Pic}_{X/T}\to \text{Pic}_{Y/T}$, is an
isomorphism.  
\end{cor}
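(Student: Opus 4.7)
The plan is to verify each of the four hypotheses of Definition \ref{defn-Zhu} for $f|_Y:Y\to T$, using Lemma \ref{lem-KV} and Proposition \ref{prop-iterate1}, and then to derive the Picard isomorphism from Proposition \ref{prop-Zhu} combined with Proposition \ref{prop-iterate1}(iii).

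First I would verify Hypothesis (i): the $f$-ampleness of the divisor $Y$ makes $\OO_\Xx(Y)$ a relatively ample invertible sheaf, so the proper morphism $\Xx\to T$ is projective, and hence so is its closed subscheme $Y\to T$; flatness of $Y$ over $T$ is assumed. For Hypothesis (ii), Lemma \ref{lem-KV} applies under our hypotheses and yields that $T$ is smooth over $\FR$ and that every fiber of $f$ is LCI; a Cartier divisor in an LCI scheme is LCI, so $Y_t$ is LCI, and the codimension bound on $\text{Sing}(Y_t)$ is part of our hypothesis. For Hypothesis (iii), since $f$ is acyclic it is in particular $2$-acyclic for the structure sheaf (Remark \ref{rmk-kacyclic}), and since $n\geq 4=2+2$ the last sentence of Lemma \ref{lem-KV} with $r=2$ propagates this property to $f|_Y$. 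For Hypothesis (iv), Proposition \ref{prop-iterate1}(ii) applies (since $n\geq 3$ and $\text{codim}_{X_t}(\text{Sing}(X_t))\geq 4\geq 3$ by acyclicity of $f$) and gives an isomorphism $\pi_1^{\text{alg}}(Y_t)\xrightarrow{\cong} \pi_1^{\text{alg}}(X_t)$; since $f$ is acyclic the target is trivial, hence so is the source.

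With acyclicity of $f|_Y$ established, Proposition \ref{prop-Zhu} applies to both $f$ and $f|_Y$, so $\text{Pic}_{X/T}$ and $\text{Pic}_{Y/T}$ are representable by étale group schemes over $T$ that are étale locally constant with finite free stalks. The pullback of invertible sheaves along the closed immersion $Y\hookrightarrow \Xx$ is a morphism between these étale group schemes. By Proposition \ref{prop-iterate1}(iii), whose hypotheses are all in force ($n\geq 4$, $\Xx$ is $\FR$-smooth with $\FR$ of characteristic $0$, and $\text{codim}_{X_t}(\text{Sing}(X_t))\geq 4$ by acyclicity of $f$), this morphism is an isomorphism on every geometric stalk; a morphism of étale $T$-schemes which is bijective on geometric points is an isomorphism, finishing the proof.

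I do not anticipate any hidden obstacles: the argument is essentially an assembly of Lemma \ref{lem-KV} and the three parts of Proposition \ref{prop-iterate1}. The one point that deserves care is that the codimension bound $\text{codim}_{X_t}(\text{Sing}(X_t))\geq 4$ needed for both Lemma \ref{lem-KV}'s downstream applicability and the invocation of Proposition \ref{prop-iterate1}(iii) must come from the \emph{acyclic} (not merely weakly acyclic) hypothesis on $f$; this is precisely the clause in Definition \ref{defn-Zhu} that distinguishes the two notions and explains why the corollary is stated for acyclic $f$.
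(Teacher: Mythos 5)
Your proposal is correct and takes essentially the same route as the paper: you verify Definition \ref{defn-Zhu}(ii)--(iv) for $f|_Y$ via Lemma \ref{lem-KV} and Proposition \ref{prop-iterate1}(ii), then obtain the Picard isomorphism from Proposition \ref{prop-iterate1}(iii) together with the observation that a morphism of \'{e}tale $T$-schemes that is bijective on geometric points is an isomorphism, exactly as in the paper. The only cosmetic difference is that the paper first reduces, via the Stein factorization (Proposition \ref{lem-Zhu}) and Proposition \ref{prop-iterate1}(i), to the case of integral geometric fibers, a point your argument covers implicitly since the $2$-acyclicity of $f|_Y$ you establish already forces connected fibers (and your explicit check of projectivity from $f$-ampleness is a detail the paper leaves tacit).
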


\begin{proof}
By Proposition \ref{lem-Zhu}, the finite part of the Stein factorization of
$f$ is finite and \'{e}tale over $T$.  Up to replacing $T$ by this
finite, \'{e}tale cover, assume that $f$ has integral geometric
fibers.

\mni
By hypothesis, $f|_Y:Y\to T$ is flat.  By Proposition
\ref{prop-iterate1}(i), the geometric fibers are integral.  
By Lemma
\ref{lem-KV} and by Proposition \ref{prop-iterate1}(ii), Definition
\ref{defn-Zhu}(ii) and (iv) hold.  
By Lemma \ref{lem-KV}, Definition
\ref{defn-Zhu}(iii) holds.  Finally, by Proposition
\ref{prop-iterate1}(iii), the restriction morphism of Picard schemes is
an isomorphism on geometric fibers.  Since this is a morphism of
\'{e}tale $T$-schemes, the restriction morphism is \'{e}tale.  Since
it is also bijective on geometric points, it is an isomorphism.    
\end{proof}


\section{Families of Hypersurfaces} \label{sec-Bertini}
\marpar{sec-Bertini} 

\mni
Let $\Xx\to T$, $\mcC\to T$, and $\mcC\to G$ be fppf morphisms.

\begin{lem} \label{lem-Bertini} \marpar{lem-Bertini}
Assume that the schemes above are finite type over a field $\FR$, and
assume that the morphisms are $\FR$-morphisms.
If $\Xx$ is smooth over $\FR$, and if $\mcC\to T$ is smooth, then also
$\Xx\times_T \mcC$ is smooth over $\FR$.  If $\text{char}(\FR)$ equals
$0$, then there exists a dense open subset $W\subset G$ such that the
morphism $\Xx\times_T \mcC \times_G W\to W$ is smooth.  
\end{lem}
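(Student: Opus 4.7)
The first assertion is a standard permanence of smoothness: the projection $\Xx\times_T\mcC\to\Xx$ is the base change of the smooth morphism $\mcC\to T$ along $\Xx\to T$, hence is smooth; composing with the smooth structural morphism $\Xx\to\SP\FR$ shows that $\Xx\times_T\mcC$ is smooth over $\FR$.

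For the second assertion, set $Z:=\Xx\times_T\mcC$, which is smooth over $\FR$ by the first part. The morphism $f:Z\to G$ factors as $Z\to\mcC\to G$, and both arrows are fppf (the first as a base change of $\Xx\to T$, the second by hypothesis), so $f$ is flat and of finite type. My plan is to reduce to the following instance of generic smoothness: any flat, finite-type $\FR$-morphism $f:Z\to G$ with $Z$ smooth over $\FR$ and $\text{char}(\FR)=0$ is smooth over a dense open of $G$. The central step is to check that, for each generic point $\eta$ of an irreducible component of $G$, whenever $Z_\eta:=Z\times_G\SP\kappa(\eta)$ is non-empty it is smooth over $\kappa(\eta)$. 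This follows from the stalk computation $\OO_{Z_\eta,z}=\OO_{Z,z}\otimes_{\OO_{G,\eta}}\kappa(\eta)=\OO_{Z,z}$ at each $z\in Z_\eta$ (using $\OO_{G,\eta}=\kappa(\eta)$): the stalk is regular because $Z$ is smooth over $\FR$, and over a characteristic-zero field regularity is equivalent to smoothness for finite-type schemes. Combined with the flatness of $f$, this places every point of $Z_\eta$ in the open smooth locus $U\subset Z$ of $f$.

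To extract the dense open of $G$, set $F:=Z\setminus U$ and take $W:=G\setminus\overline{f(F)}$. By Chevalley's theorem, $f(F)$ is constructible, and by the preceding step it contains no generic point of any irreducible component $G_i\subset G$: either $G_i$ is not dominated by $f$, in which case its generic point already avoids $f(Z)\supset f(F)$, or it is, in which case $Z_{\eta_i}\subset U$ and so $\eta_i\notin f(F)$. Therefore $\overline{f(F)}\cap G_i$ is a proper closed subset of $G_i$ for every $i$, making $W$ dense open in $G$; and $f^{-1}(W)\subset U$ gives the required smoothness. The only real obstacle is the bookkeeping across irreducible components of $G$ not dominated by $f$, which is cleanly handled by constructibility of the image.
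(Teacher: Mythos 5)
Your proof is correct, but it takes a genuinely different route from the paper on the second assertion. The first part (base change of the smooth $\mcC\to T$, composed with the smooth structural morphism of $\Xx$) is exactly the paper's argument. For generic smoothness, however, the paper simply cites the Generic Smoothness Theorem \cite[Corollary III.10.7]{H}, whereas you re-prove it from scratch in the generality actually needed: Hartshorne's statement is for morphisms of varieties over an algebraically closed field of characteristic $0$ with nonsingular source, while $G$ here is merely a finite type $\FR$-scheme, possibly reducible and a priori not reduced, over a field that need not be algebraically closed. Your chain — regularity of the stalks of the generic fibers, the equivalence of regularity and smoothness for finite type schemes over the (perfect) characteristic-zero field $\kappa(\eta)$, the fiberwise criterion plus openness of the smooth locus, and Chevalley constructibility to handle each irreducible component of $G$ separately — proves precisely the scheme-level statement being used, and it isolates correctly where characteristic $0$ enters: $\kappa(\eta)$ is perfect, which fails in characteristic $p$ (relative Frobenius gives a flat morphism with regular but non-smooth generic fiber). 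One small step you should justify: the identity $\OO_{G,\eta}=\kappa(\eta)$ presumes $G$ is reduced at $\eta$, which is not among the hypotheses; but it does hold whenever $Z_\eta\neq\emptyset$, since flatness of $f$ makes the local homomorphism $\OO_{G,\eta}\to\OO_{Z,z}$ faithfully flat, hence injective, and $\OO_{Z,z}$ is regular, hence reduced, so the Artinian local ring $\OO_{G,\eta}$ is reduced and therefore is the field $\kappa(\eta)$. With that one-line patch your argument is complete, and it buys a self-contained proof where the paper's citation is, strictly speaking, an appeal to the standard generalization of the quoted result.
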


\begin{proof}
Since $\mcC\to T$ is smooth, also $\Xx\times_T \mcC \to \Xx$ is
smooth.  Since $\Xx$ is smooth over $\FR$, also $\Xx\times_T \mcC$ is
smooth over $\FR$.  If $\text{char}(\FR)$ equals $0$, then by the
Generic Smoothness Theorem,
cf. \cite[Corollary III.10.7]{H}, there exists a dense open subset
$W\subset G$ such that $\Xx\times_T \mcC \times_G W\to W$ is smooth.
\end{proof}

\begin{notat} \label{notat-T} \marpar{notat-T}
Let $T$ be a Noetherian scheme of pure dimension $m$.  Let $\Xx_T\subset
\PP^r_T$ be a closed subscheme such that $p:\Xx_T \to T$ is flat
of pure
relative dimension $n\geq 1$.  Denote by $\Xx_T^{\text{sm}}\subset \Xx_T$ 
the open subscheme
on which $p$ is smooth.  
\end{notat}

\mni
By \cite[Expos\'{e} XV, Corollaire 1.3.4]{SGA7II},
there exists an open subscheme $\Xx_T^{\text{odp}}\subset
\Xx_T$ consisting of points of geometric fibers where either $p$ is
smooth or else has an
ordinary double point.  

\begin{defn} \label{defn-Bertini} \marpar{defn-Bertini}
The \emph{smooth locus of $p$}, 
$T^{\text{sm}}$, is the open complement in $T$ 
of $p(X\setminus \Xx_T^{\text{sm}})$. 
Similarly, the \emph{ordinary locus of $p$},
$T^{\text{odp}}\subset T$, is the open complement of
$p(X\setminus\Xx_T^{\text{odp}})$, i.e., the maximal open subscheme of
$T$ over which $p$ has geometrically reduced fibers that are either
smooth or else admit (at worst) finitely many ordinary double points.
Over the open $T^{\text{odp}}$, the morphism $X\setminus
\Xx_T^{\text{sm}} \to T$ is finite.  The \emph{Lefschetz
  locus}, 
$T^{\text{Lef}}$, is the
maximal open subscheme of $T^{\text{odp}}$ over which this finite morphism is
a closed immersion.  Thus, over $T^{\text{Lef}}$, every geometric
fiber is either smooth or else it is reduced with a single ordinary
double point.  
\end{defn}

\mni
Denote by $P_r(t)\in \QQ[t]$ the numerical polynomial such that
$P_r(s)$ equals $\binom{r+s}{r}$ for every integer $s\geq -r$.  
For each integer $d\geq 1$, 
the projective space $\PP^{N_d}_T
= \text{Hilb}^{P_{r}(t)-P_r(t-d)}_{\PP^r_T/T}$ parameterizes degree
$d$ hypersurfaces $H\subset \PP^r$.

\begin{defn} \label{defn-dual} \marpar{defn-dual}
The \emph{degenerate locus} or \emph{dual locus}, 
$\check{\Xx}_T$,  is
the closed
subset of $\PP^{N_d}_T$ whose geometric points relative to
$\SP \kappa \to T$, 
parameterize hypersurfaces
$H\subset \PP^r_\kappa$ for which $H\cap \Xx_\kappa$ is \textbf{not} a smooth
$\kappa$-scheme of dimension $n-1$, i.e., either it has an irreducible
component of dimension $\geq n$ or else it is singular.  
The \emph{badly degenerate locus},
$F_1\subset \check{\Xx}_T$, is 
the closed subset such that $H\cap
\Xx_\kappa$ either (i) has an irreducible component of dimension $\geq n$,
(ii) it is nonreduced, or (iii) 
it is reduced of dimension $n$, yet it has worse
than a single ordinary double point singularity.  
\end{defn}

\mni
For the universal
family of hypersurface sections of $\Xx_T$ over $\PP^{N_d}_T$, say $Y\to
\check{\PP}^r_T$, the degenerate locus, resp. the badly degenerate
locus,
is the union of the
non-flat locus with the closed complement of
$(\check{\PP}^r_T)^{\text{sm}}$,
resp. $(\check{\PP}^r_T)^{\text{Lef}}$, as defined in Definition
\ref{defn-Bertini}.  Thus, the degenerate locus and the badly
degenerate locus are closed subsets.

\begin{cor} \label{cor-iterate2} \marpar{cor-iterate2}
Let $\FR$ be a field.
With notations as above, assume that $T$ is a finite type
$\FR$-scheme, and assume that $\Xx_T$ is smooth over $\FR$.  Then
$\Xx_{\PP^{N_d}} := \Xx_T\times_T \PP^{N_d}_T$ is smooth over
$\FR$. 
Also the universal hypersurface, $Y\subset \Xx_{\PP^{N_d}}$ as
above, is smooth over $\FR$.
If $\text{char}(K)$ equals $0$, if $\Xx_T/T$ is acyclic,
and if $n\geq 4$, resp. if $n\geq 5$, then the restriction
of $Y$ over $\PP^{N_d}_T\setminus \check{\Xx}_T$, resp. over
$\PP^{N_d}_t\setminus F_1$, is acyclic. Also over this
(respective) open subset,
the natural
morphism from the pullback of $\Pic{}{\Xx_T/T}$ to the relative Picard
scheme of $Y$ is an isomorphism.
\end{cor}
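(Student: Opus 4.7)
The plan is to reduce everything to Corollary \ref{cor-iterate1} applied to the base change $\Xx_{\PP^{N_d}} \to \PP^{N_d}_T$ restricted over the relevant open subsets, with the universal hypersurface $Y$ playing the role of the $f$-ample Cartier divisor. First I will establish the two smoothness assertions directly, and then I will verify the hypotheses of Corollary \ref{cor-iterate1} over the specified open loci of $\PP^{N_d}_T$.

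For the smoothness of $\Xx_{\PP^{N_d}}$, I invoke Lemma \ref{lem-Bertini} with $\mcC = \PP^{N_d}_T$, which is smooth over $T$; since $\Xx_T$ is smooth over $\FR$, so is $\Xx_T \times_T \PP^{N_d}_T$. For the smoothness of the universal family $Y$, I will look at the first projection $Y \to \Xx_T$. Its fiber over a point $x \in \Xx_T$ is the linear subspace of degree $d$ hypersurfaces in $\PP^r_{\kappa(x)}$ containing $x$, a $\PP^{N_d-1}$. Thus $Y \to \Xx_T$ is a Zariski-locally trivial projective bundle, hence smooth, and composing with the smoothness of $\Xx_T$ over $\FR$ gives smoothness of $Y$ over $\FR$.

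Now I will apply Corollary \ref{cor-iterate1} to the base change $f' \colon \Xx_{T'} \to T'$ with the effective Cartier divisor $Y_{T'} := Y|_{T'}$, where $T'$ is either $\PP^{N_d}_T \setminus \check{\Xx}_T$ or $\PP^{N_d}_T \setminus F_1$. The total space $\Xx_{T'}$ is an open subscheme of $\Xx_{\PP^{N_d}}$, hence smooth over $\FR$. Acyclicity of the original $\Xx_T \to T$ descends to acyclicity of $f'$, since every clause of Definition \ref{defn-Zhu} is a condition on geometric fibers and is therefore preserved under base change. By the very definition of $T'$ as the complement of the non-flat locus (together with more), $Y_{T'}$ is $T'$-flat, and it is $f'$-ample as a hyperplane section in $\PP^r_{T'}$. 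It remains to check the codimension of $\text{Sing}(Y_t)$ in $Y_t$: over $\PP^{N_d}_T \setminus \check{\Xx}_T$ every $Y_t$ is smooth, so the codimension is infinite; over $\PP^{N_d}_T \setminus F_1$ every $Y_t$ is either smooth or has exactly one ordinary double point, so the codimension equals $\dim Y_t = n - 1$, which is $\geq 4$ precisely when $n \geq 5$. Combined with the dimensional hypothesis $n \geq 4$ of Corollary \ref{cor-iterate1}, this matches the bifurcated assumption $n \geq 4$ (resp.\ $n \geq 5$) in the statement. Corollary \ref{cor-iterate1} then yields acyclicity of $Y_{T'} \to T'$ and an isomorphism $\Pic{}{\Xx_{T'}/T'} \to \Pic{}{Y_{T'}/T'}$ of \'etale group schemes. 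Since $\Pic{}{\Xx_{T'}/T'}$ is canonically the pullback of $\Pic{}{\Xx_T/T}$ along $T' \to T$, this is the asserted isomorphism.

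The main obstacle is the codimension accounting in the badly degenerate case: the bound $n \geq 5$ is sharp, because a lone ordinary double point has codimension exactly $n - 1$ in $Y_t$, and Corollary \ref{cor-iterate1} requires this codimension to be at least $4$. Once this is observed, the argument is a routine assembly of Lemma \ref{lem-Bertini}, the projective-bundle structure on $Y \to \Xx_T$, and Corollary \ref{cor-iterate1}; no further delicate analysis is needed.
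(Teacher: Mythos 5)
Your proposal is correct and follows the paper's proof essentially verbatim: Lemma \ref{lem-Bertini} gives smoothness of $\Xx_{\PP^{N_d}}$ over $\FR$, the projective-bundle structure of the universal hypersurface gives smoothness of $Y$, and Corollary \ref{cor-iterate1} applied over the two open loci yields the conclusion, with exactly the paper's codimension count ($Y_t$ smooth over $\PP^{N_d}_T\setminus \check{\Xx}_T$; a single ordinary double point of codimension $n-1\geq 4$ over $\PP^{N_d}_T\setminus F_1$, forcing $n\geq 5$). The only differences are cosmetic and in your favor: you phrase the bundle structure as the projection $Y\to \Xx_T$ (the paper writes $Y\to \Xx_{\PP^{N_d}}$, evidently a slip for this same projection), and you make explicit the base-change stability of acyclicity and the identification of $\Pic{}{\Xx_{T'}/T'}$ with the pullback of $\Pic{}{\Xx_T/T}$, which the paper leaves implicit.
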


\begin{proof}
By Lemma \ref{lem-Bertini}, $\Xx_{\PP^{N_d}}$ is smooth over $\FR$.
The same method proves that $Y$ is smooth over $\FR$: the projection
$Y\to \Xx_{\PP^{N_d}}$ is a projective space bundle.  If $n\geq 4$,
then the hypotheses of Corollary \ref{cor-iterate1} are satisfied for
$Y\to \PP^{N_d}_T$ over $\PP^{N_d}_T\setminus \check{\Xx}_T$.  If
$n\geq 5$, then over $\PP^{N_d}_T\setminus F_1$, the fibers of $Y_t$
have singular locus of codimension $n-1\geq 4$, so the hypotheses are
satisfied over $\PP^{N_d}_T\setminus F_1$.  
\end{proof}

\begin{prop}\cite[Expos\'{e} XVII, Th\'{e}or\`{e}me
  2.5]{SGA7II} \label{prop-Lef} \marpar{prop-Lef}  
Assume that $T^{\text{Lef}}$ equals all of $T$, and assume that
$T^{\text{sm}}$ is a dense open subset of $T$.  Then for every $d\geq
2$, 
every irreducible
component of $\check{\Xx}_T$, resp. of $F_1$, has codimension $\geq 1$,
resp. $\geq 2$, in $\PP^{N_d}_T$.  In characteristic $0$ this also holds
with $d=1$.
\end{prop}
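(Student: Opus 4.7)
The plan is to reduce the codimension bounds in $\PP^{N_d}_T$ to fiberwise bounds over $T$, and then to establish those fiberwise bounds by a standard Bertini-type incidence variety calculation. Concretely, I will show that for every geometric point $t$ of $T$ one has $\dim \check{\Xx}_t \leq N_d - 1$ and $\dim F_{1,t} \leq N_d - 2$ inside $\PP^{N_d}_{\kappa(t)}$. For an irreducible component $Z$ of $\check{\Xx}_T$ (respectively of $F_1$) projecting to $T' := \pr{T}(Z) \subset T$, the fiber-dimension inequality $\dim Z \leq \dim T' + \dim Z_{\eta}$ for the generic fiber $Z_\eta$ then forces $\dim Z \leq m + N_d - 1$ (respectively $\leq m + N_d - 2$), giving the required codimension $\geq 1$ (respectively $\geq 2$) in $\PP^{N_d}_T$.

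For the fiberwise bound on $\check{\Xx}_t$, I would introduce the incidence variety $I_1 \subset \Xx_t \times \PP^{N_d}_{\kappa(t)}$ of pairs $(x, [H])$ such that $H \cap \Xx_t$ fails to be smooth of dimension $n-1$ at $x$, and project to $\Xx_t$. The assumption $T^{\text{Lef}} = T$ ensures that $\text{Sing}(\Xx_t)$ consists of at most a single ordinary double point. Over a smooth point $x$, the fiber of $I_1$ is cut out in $\PP^{N_d}_{\kappa(t)}$ by the linear conditions $H(x) = 0$ and $dH|_x \equiv 0$ on $T_x \Xx_t$, a codimension $1+n$ condition; over the at most one ordinary double point of $\Xx_t$, the fiber is a hyperplane. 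In either case, the image $\check{\Xx}_t$ has dimension at most $N_d - 1$, and the strata of $\check{\Xx}_t$ corresponding to $\Xx_t \subset H$ or to non-reduced $H \cap \Xx_t$ satisfy even stronger inequalities. For $F_{1,t}$ I iterate this incidence analysis, counting separately (i) pairs of singular points of $H \cap \Xx_t$ (two smooth tangencies, one tangency plus the ODP of $\Xx_t$, or passage through the ODP paired with itself for a worse singularity there), and (ii) worse-than-ODP singularities of $H \cap \Xx_t$ at a single point, where one imposes the extra condition that the restricted Hessian of $H$ on $T_x \Xx_t$ be degenerate, a further codimension $\geq 1$ beyond tangency. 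Each case produces an incidence variety of dimension $\leq N_d - 2$ whose image in $\PP^{N_d}_{\kappa(t)}$ bounds the corresponding piece of $F_{1,t}$.

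The hard part is guaranteeing that the linear conditions cutting out the fibers of $I_1$ actually achieve the expected codimension in $\PP^{N_d}_{\kappa(t)}$: in characteristic zero this is classical Bertini and applies for every $d \geq 1$, but in positive characteristic one has to compose with the degree $d$ Veronese embedding $v_d : \PP^r \hookrightarrow \PP^{N_d}$ and invoke Kleiman's transversality theorem for the induced linear system of hyperplanes in $\PP^{N_d}$, which succeeds precisely for $d \geq 2$. This is the reason the final clause of the statement restricts $d = 1$ to characteristic zero; everything else is dimension bookkeeping on the incidence varieties, which is the content of the cited reference.
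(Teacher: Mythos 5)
Your skeleton is genuinely different from the paper's, and the difference matters. The paper re-proves nothing over the smooth locus: it quotes SGA7~II, Exp.~XVII, Th.~2.5 wholesale for both fiberwise codimension bounds over $T^{\text{sm}}$, and its only original content is a short devissage over $\Delta = T\setminus T^{\text{sm}}$ --- the preimage of $\Delta$ in $\PP^{N_d}_T$ already has codimension $1$, and since $T^{\text{Lef}}=T$ each fiber over $\Delta$ has a single ordinary double point $x$, so the hypersurfaces through $x$ contribute a further codimension $1$, reducing everything to the smooth case. Your ODP case analysis plays the same role as that devissage (and if your fiberwise bounds held at \emph{every} geometric point, the density of $T^{\text{sm}}$ would even become superfluous), so the family-level bookkeeping is fine. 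The problem is that you undertake to re-derive the fiberwise bounds that the paper cites, and the two delicate steps --- which you yourself flag as ``the hard part'' --- are handled incorrectly.

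First, in positive characteristic you invoke Kleiman's transversality theorem after the Veronese embedding. Kleiman's generic-translate theorem is a characteristic-zero result (its failure in characteristic $p$ is classical), so it cannot supply the $d\geq 2$ case. What actually works in any characteristic for $d\geq 2$ is that $H^0(\PP^r,\OO_{\PP^r}(d))$ surjects onto second-order principal parts at every point (separation of $2$-jets), so each stratum of the incidence correspondence --- tangency at one point, bitangency at two points, tangency with degenerate Hessian --- is a projective bundle of the expected dimension over (a product of copies of) $\Xx_t$; this jet-theoretic mechanism is precisely how the cited SGA7 theorem proceeds. Second, for $d=1$ in characteristic $0$, your pointwise claim that a degenerate restricted Hessian is ``a further codimension $\geq 1$ beyond tangency'' is false fiber-by-fiber: the Hessians achievable at a fixed smooth point $x$ form the image of the second fundamental form, and at parabolic points --- which can fill a divisor, or all of $\Xx_t$ when the dual variety is deficient (e.g.\ the Segre varieties $\PP^1\times\PP^m$, $m\geq 2$) --- \emph{every} tangent hyperplane has degenerate Hessian, so your sub-incidence locus can equal all of $I_1$ and the dimension count gives only $N_1-1$, not $N_1-2$. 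The correct $d=1$, characteristic-$0$ argument is global, via reflexivity/biduality: either $\check{\Xx}_t$ has codimension $\geq 2$ (dual-deficient case), in which case $F_{1,t}\subset\check{\Xx}_t$ is trivially small, or $\check{\Xx}_t$ is a hypersurface and the \emph{generic} tangent hyperplane has a single nondegenerate tangency, so $F_{1,t}$ is a proper closed subset of it; ``classical Bertini'' does not deliver this dichotomy. So your proof fails in exactly the two regimes responsible for the hypotheses ``$d\geq 2$'' resp.\ ``characteristic $0$ for $d=1$''; since you defer the remaining bookkeeping to the cited reference anyway, the paper's route --- cite the fiberwise theorem and prove only the $\Delta$-devissage --- is the one to follow.
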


\begin{proof}
The statement over $T^{\text{sm}}$ follows directly from loc. cit.  By
hypothesis, every component of the singular locus, 
$\Delta:=T\setminus T^{\text{sm}}$, has
codimension $\geq 1$ in $T$.  The inverse image of $\Delta$ in
$\PP^{N_d}_T$ has codimension $1$.  For each geometric point $\SP
\kappa \to \Delta$, since this is a point of $T^{\text{Lef}}$, the
corresponding fiber $\Xx_\kappa$ has a single ordinary double point
$x$.  Inside $\PP^{N_d}_\kappa$, the set parameterizing $H$ with $x\in
H$ is a proper closed subset, hence has codimension $\geq 1$.  In
total, the locus in $\Delta \times_T \PP^{N_d}_T$ parameterizing $H$
containing a singular point of $p$ is a subset of codimension $\geq
2$.  Thus the proposition over all of $T$ is reduced to the
proposition over $T^{\text{sm}}$.
\end{proof}


\section{Families of Complete Intersections} \label{sec-CI}
\marpar{sec-CI} 

\mni
Let $\Xx_T \to T$ be an fppf morphism of pure relative dimension $n$.

\begin{notat} \label{notat-CI} \marpar{notat-CI} 
Let $\cb$ be an integer with $1\leq
\cb\leq n$, let
$(\iota_j:\Xx_T \hookrightarrow \PP^{r_j}_T)_{1\leq j\leq \cb}$ be an
ordered $\cb$-tuple of closed immersions with associated very ample
invertible sheaves $\mc{A}_j = \iota_j^*\OO_{\PP^{r_j}_T}(1)$.  Let
$\underline{d}=(d_1,\dots,d_\cb)$ be an ordered $\cb$-tuple 
of integers $d_i\geq 1$.  For each
$j=1,\dots,\cb$, denote by $V_j(d_j)$ the free $\OO_T$-module
$H^0(\PP^{r_j}_T,\OO_{\PP^{r_j}_T}(d_j))$.  Denote by
$V(\underline{d})$ the direct sum $V_1(d_1)\oplus \dots \oplus
V_\cb(d_\cb)$ as a free $\OO_T$-module.  Denote by $\PP_T
V(\underline{d})$ the projective space over $T$ on which there is a
universal ordered
$\cb$-tuple $(\phi_1,\dots,\phi_\cb)$ 
of sections of the invertible sheaves $\OO_{\PP^{r_j}}(d_j)$. 
Precisely, for
the product 
$$
P=\PP_T V(\underline{d}) \times_T (\PP_T^{r_1}\times \dots
\PP_T^{r_\cb})
$$
with its projections 
$$
\text{pr}_0:P\to
\PP_TV(\underline{d}) \text{ and }
\text{pr}_j:P\to \PP_T^{r_j},
$$
the sequence
$(\phi_1,\dots,\phi_\cb)$ is a universal homomorphism of coherent sheaves
$$
\text{pr}_1^*\OO_{\PP^{r_1}_T}(-d_1)\oplus \dots \oplus
\text{pr}_\cb^*\OO_{\PP^{r_\cb}_T}(-d_\cb) \to \text{pr}_0^*\OO_{\PP_T
  V(\underline{d})}(1),  
$$
or equivalently, a universal homomorphism of coherent sheaves,
$$
(\phi_1,\dots,\phi_\cb):\text{pr}_0^*\OO_{\PP_T
  V(\underline{d})}(-1)\otimes\left(\text{pr}_1^*\OO_{\PP^{r_1}_T}(-d_1)\oplus
  \dots \oplus 
\text{pr}_\cb^*\OO_{\PP^{r_\cb}_T}(-d_\cb)\right) \to \OO_P.  
$$
\end{notat}

\mni
For the diagonal closed immersion $\iota = (\iota_1,\dots,\iota_\cb)$ of
$\Xx_T$ into $\PP_T^{r_1}\times_T \dots \times_T \PP_T^{r_\cb}$,  for
every  $j=1,\dots,c$, there is an associated homomorphism of coherent
sheaves on $\PP_T V(\underline{d})\times_T \Xx_X$,
$$
\iota^*\phi_j:\text{pr}_0^*\OO_{\PP_T
  V(\underline{d})}(-1)\otimes_{\OO}
\text{pr}_1^*\iota_j^*\OO_{\PP^{r_j}_T}(-d_j) \to \OO_{\PP_T
  V(\underline{d}) \times_T \Xx_T}.
$$

\begin{defn} \label{defn-CI} \marpar{defn-CI}
Define $Y_j$ to be the Cartier divisor
on $\PP_T V(\underline{d})\times_T \Xx_T$
whose ideal sheaf is the image of $\iota^*\phi_j$.
For every $j=0,\dots,\cb$, define the
closed subscheme
$\Xx_j \subset \PP_T V(\underline{d})\times_T \Xx_T$ recursively by
$$
\Xx_0 = \PP_T V(\underline{d}) \times_T \Xx_T \text{ and } \Xx_j = Y_j \cap
\Xx_{j-1}
$$ 
for every $j=1,\dots,\cb$.  
Define two sequences of open subsets 
$$
\PP_T V(\underline{d})^{\text{sm}}_\cb \subset \PP_T
V(\underline{d})^{\text{sm}}_{\cb-1}\subset \dots 
\subset \PP_T V(\underline{d})^{\text{sm}}_2 \subset \PP_T
V(\underline{d})^{\text{sm}}_1 \subset 
\PP_T V(\underline{d})^{\text{sm}}_0 = \PP_T V(\underline{d}),  
$$
respectively,
$$
\PP_T V(\underline{d})^{\text{Lef}}_\cb \subset \PP_T
V(\underline{d})^{\text{Lef}}_{\cb-1}\subset \dots 
\subset \PP_T V(\underline{d})^{\text{Lef}}_2 \subset \PP_T
V(\underline{d})^{\text{Lef}}_1 \subset 
\PP_T V(\underline{d})^{\text{Lef}}_0 = \PP_T V(\underline{d}), 
$$
where for $i=1,\dots,\cb$, 
$\PP_T V(\underline{d})^{\text{Lef}}_i$, resp. $\PP_T
V(\underline{d})^{\text{sm}}_i$,  
is the maximal open subset such that for every
$j=0,\dots,i$, 
\begin{enumerate}
\item[(i)] $\Xx_j\times_{\PP_T V(\underline{d})} \PP_T
  V(\underline{d})_i \to \PP_T V(\underline{d})_i$ 
is flat of relative
  dimension $n-j$,
\item[(ii)] the geometric fibers are reduced, and
\item[(iii)] every geometric fiber has, at worst, a single ordinary
  double point and no other singularities, resp. every geometric fiber
  is smooth.
\end{enumerate}
\end{defn}

\mni
By construction $\PP_T V(\underline{d})^{\text{sm}}_i$ is an open subset of
$\PP_T V(\underline{d})^{\text{Lef}}_i$.  

\begin{notat} \label{notat-CI2} \marpar{notat-CI2}
For each $i\geq 1$, denote by $\check{\Xx}_{i-1}$
the relative complement of $\PP_T V(\underline{d})^{\text{sm}}_{i}$ in
$\PP_T V(\underline{d})^{\text{Lef}}_{i-1}$.   
Denote by $F_{i-1}$
the relative complement of $\PP_T V(\underline{d})^{\text{Lef}}_{i}$
in $\PP_T V(\underline{d})^{\text{Lef}}_{i-1}$.   
\end{notat}

\mni
Note
that on $\PP_T V(\underline{d})^{\text{Lef}}_{i-1}$ 
there is a well-defined morphism $\Phi_{i-1}:\PP_T
V(\underline{d})^{\text{Lef}}_{i-1} 
\to \PP_T V(d_1,\dots,d_{i-1})$ that is flat.  In fact the image is the
corresponding open 
$$
\PP_T V(d_1,\dots,d_{i-1})^{\text{Lef}}_{i-1},
$$
and the morphism $\Phi_{i-1}$ to its image is Zariski locally on the
image isomorphic to the vector bundle $V(d_i,\dots,d_\cb)\times_T \PP_T
V(d_1,\dots,d_{i-1})^{\text{Lef}}_{i-1}$.  

\begin{cor} \label{cor-Lef} \marpar{cor-Lef}
If the characteristic is not $0$, assume that every $d_i\geq 2$.
With the same hypotheses as in Proposition \ref{prop-Lef},
for $i=1,\dots,\cb$, the closed subset $F_{i-1}$ has codimension $\geq 2$ in
$\PP_T V(\underline{d})^{\text{Lef}}_{i-1}$.  The
complement of $\PP_T V(\underline{d})^{\text{Lef}}_\cb$ in $\PP_T
V(\underline{d})$ has codimension $\geq 2$.  If $p$ has connected
geometric fibers and if $n\geq \cb+1$, resp. if $n\geq \cb+2$, 
then every geometric fiber of $\pr{2}$
is connected, 
resp. is normal and irreducible,
$$
\pr{2}:\Xx_\cb\times_{\PP_T V(\underline{d})} \PP_T
V(\underline{d})^{\text{Lef}}_\cb \to 
\PP_T V(\underline{d})^{\text{Lef}}_\cb.
$$ 
Finally, if $\text{char}(\FR)$ equals $0$, if $\Xx_T$ is smooth over
$\FR$, if $\Xx_T/T$ is acyclic, and if $n \geq \cb+ 4$,
then $\Xx_\cb\times_{\PP_T V(\underline{d})} \PP_T
V(\underline{d})^{\text{Lef}}_\cb$ is smooth over $\FR$, 
the morphism $\pr{2}$ above is acyclic, and the
natural map from $\text{Pic}_{\Xx_T/T}$ to the relative Picard scheme
of $\pr{2}$ is an isomorphism.
\end{cor}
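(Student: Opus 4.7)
The plan is to prove the four assertions by induction on $i=1,\dots,c$, iterating the previously established Proposition \ref{prop-Lef}, Proposition \ref{prop-iterate1}, and Corollary \ref{cor-iterate1}. A useful preliminary observation, analogous to the projective-bundle trick used in the proof of Corollary \ref{cor-iterate2}, is that every $\Xx_i$ is smooth over $\FR$: the projection $\Xx_i\to\Xx_T$ forgetting $[\phi_1,\dots,\phi_c]$ realizes $\Xx_i$ as a projective subbundle over $\Xx_T$, the fiber over $x$ being the linear subvariety of $\PP V(\underline{d})$ cut out by the $i$ linear conditions $\phi_j(\iota_j(x))=0$; smoothness of $\Xx_i$ follows from smoothness of $\Xx_T$ (with $T$ itself smooth by Lemma \ref{lem-KV}).

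For the codimension of $F_{i-1}$ I would apply Proposition \ref{prop-Lef} at the level of the intermediate family $\Xx'_{i-1}/\PP_T V(d_1,\dots,d_{i-1})^{\text{Lef}}_{i-1}$ obtained by descending along $\Phi_{i-1}$. Its Lef locus is everything by the inductive definition of the open, and its smooth locus is dense either by Lemma \ref{lem-Bertini} in characteristic zero or by the generic-smoothness content of Proposition \ref{prop-Lef} itself when $d_i\geq 2$. This yields the required codimension bounds on $\check{\Xx}_{i-1}$ and on the badly degenerate locus inside $\PP_T V(d_1,\dots,d_i)^{\text{Lef}}_{i-1}$; pulling back along the affine-bundle projection $\PP_T V(\underline{d})^{\text{Lef}}_{i-1}\to \PP_T V(d_1,\dots,d_i)^{\text{Lef}}_{i-1}$ preserves codimension, so $F_{i-1}$ has codimension $\geq 2$ in $\PP_T V(\underline{d})^{\text{Lef}}_{i-1}$. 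Since the complement of $\PP_T V(\underline{d})^{\text{Lef}}_c$ in $\PP_T V(\underline{d})$ is $F_0\cup\cdots\cup F_{c-1}$, it likewise has codimension $\geq 2$.

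The geometric-fiber assertions follow by iterating Proposition \ref{prop-iterate1}(i) together with Corollaire XII.3.5 of SGA 2: each geometric fiber of $\pr{2}$ is an iterated ample hyperplane section of $\Xx_{T,t}$, connectedness persists as long as the dimension stays $\geq 1$ (requiring $n\geq c+1$) and integrality plus normality as long as the dimension stays $\geq 2$ after each cut (requiring $n\geq c+2$); the intermediate $\Xx_{i-1,t}$ are LCI, hence Cohen--Macaulay, hence $S_3$, supplying the remaining hypothesis of Proposition \ref{prop-iterate1}(i). Finally, acyclicity of $\pr{2}$ and the Picard isomorphism follow by iterating Corollary \ref{cor-iterate1} along the chain $\Xx_0\supset\Xx_1\supset\cdots\supset\Xx_c$ over $\PP_T V(\underline{d})^{\text{Lef}}_c$: at step $i$ the relative dimension is $n-i+1\geq 5$ (from $n\geq c+4$), the singular locus of each geometric fiber of $\Xx_i$ has codimension $n-i\geq 4$ in that fiber (being at worst a single ordinary double point), and $\Xx_{i-1}$ is smooth over $\FR$ by the preliminary observation; composing the $c$ resulting Picard-scheme isomorphisms yields the natural map $\text{Pic}_{\Xx_T/T}\to \text{Pic}_{\Xx_c/\PP_T V(\underline{d})^{\text{Lef}}_c}$ and identifies it as an isomorphism.

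The main obstacle is the codimension estimate in the second paragraph: one must carefully reconcile Proposition \ref{prop-Lef}, which is formulated for a single hypersurface parameter $\PP^{N_d}$ over a base, with the stacked parameter space $\PP(V_1\oplus\cdots\oplus V_c)$ used throughout the construction, and verify inductively the density of the smooth locus needed to apply the proposition at each stage. The remaining steps reduce to bookkeeping of dimensions and codimensions, together with repeated invocations of Corollary \ref{cor-iterate1} and Proposition \ref{prop-iterate1}.
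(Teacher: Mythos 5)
Your proposal is correct and follows essentially the same route as the paper's proof: induction on $i$, with Proposition \ref{prop-Lef} applied through the bundle structure $\Phi_{i-1}$ for the codimension bounds, \cite[Corollaire XII.3.5]{SGA2} plus Serre's criterion (equivalently Proposition \ref{prop-iterate1}(i)) for connectedness, normality, and irreducibility of the fibers, and the hyperplane-section machinery for acyclicity and the Picard isomorphism. The only cosmetic difference is that where the paper invokes Corollary \ref{cor-iterate2} with induction on $\cb$, you iterate Corollary \ref{cor-iterate1} directly, supplying smoothness of each $\Xx_i$ over $\FR$ via the projective-subbundle projection to $\Xx_T$ --- which is exactly how Corollary \ref{cor-iterate2} is itself proved, so the content is the same, and your explicit verification of the density of the smooth locus at each inductive stage is a point the paper leaves implicit.
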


\begin{proof}
The first assertion follows from Proposition \ref{prop-Lef} applied to
the restriction over $\PP_T V(\underline{d})^{\text{Lef}}_{i-1}$ of the morphism
$\Xx_{i-1}\to \PP_T V(\underline{d})$.   
Thus, by induction on $i$, for every $i=0,\dots,\cb$,
the closed complement of the open subset 
$\PP_T V(\underline{d})^{\text{Lef}}_i$ in $\PP_T
V(\underline{d})$ has codimension $\geq 2$.

\mni
Assuming that $n\geq \cb+1$,
connectedness of the fibers 
of $\Xx_i \times_{\PP_T V(\underline{d})} \PP_T
V(\underline{d})^{\text{Lef}}_i \to \PP_T
V(\underline{d})^{\text{Lef}}_i$ for $i=1,\dots,\cb$
is proved by induction on $i$ using
\cite[Corollaire 3.5, Expos\'{e} XII]{SGA2} for the induction step.
If $n\geq \cb+2$, then the geometric fibers of $\Xx_\cb$ are connected,
projective schemes of pure dimension $n-\cb\geq 2$ 
that are either smooth or else have a single
ordinary double point.  In particular, the geometric fiber is a local
complete intersection scheme that is regular away from codimension
$\geq 2$.  By Serre's Criterion, \cite[Th\'{e}or\`{e}me
5.8.6]{EGA4}, the geometric fiber is normal.  Since it is also
connected, it is irreducible.   

\mni
The acyclic hypothesis follows from Corollary
\ref{cor-iterate2} and induction on $\cb$.
\end{proof}

\bibliography{my}

\begin{thebibliography}{KMM87}

\bibitem[BGI71]{SGA6}
P.~Berthelot, A.~Grothendieck, and L.~Illusie, editors.
\newblock {\em Th\'eorie des intersections et th\'eor\`eme de
  {R}iemann-{R}och}.
\newblock Springer-Verlag, Berlin, 1971.
\newblock S\'eminaire de G\'eom\'etrie Alg\'ebrique du Bois-Marie 1966--1967
  (SGA 6), Dirig\'e par P. Berthelot, A. Grothendieck et L. Illusie. Avec la
  collaboration de D. Ferrand, J. P. Jouanolou, O. Jussila, S. Kleiman, M.
  Raynaud et J. P. Serre, Lecture Notes in Mathematics, Vol. 225.

\bibitem[dJHS11]{dJHS}
A.~J. de~Jong, Xuhua He, and Jason~Michael Starr.
\newblock Families of rationally simply connected varieties over surfaces and
  torsors for semisimple groups.
\newblock {\em Publ. Math. Inst. Hautes \'Etudes Sci.}, (114):1--85, 2011.

\bibitem[Ful84]{F}
William Fulton.
\newblock {\em Intersection theory}, volume~2 of {\em Ergebnisse der Mathematik
  und ihrer Grenzgebiete (3) [Results in Mathematics and Related Areas (3)]}.
\newblock Springer-Verlag, Berlin, 1984.

\bibitem[Gro62]{FGA}
Alexander Grothendieck.
\newblock {\em Fondements de la g\'eom\'etrie alg\'ebrique. [{E}xtraits du
  {S}\'eminaire {B}ourbaki, 1957--1962.]}.
\newblock Secr\'etariat math\'ematique, Paris, 1962.

\bibitem[Gro67]{EGA4}
A.~Grothendieck.
\newblock \'{E}l\'ements de g\'eom\'etrie alg\'ebrique. {IV}. \'{E}tude locale
  des sch\'emas et des morphismes de sch\'emas.
\newblock {\em Inst. Hautes \'Etudes Sci. Publ. Math. 20 (1964), 101-355; ibid.
  24 (1965), 5-231; ibid. 28 (1966), 5-255; ibid.}, (32):5--361, 1967.
\newblock \verb+http://www.numdam.org/item?id=PMIHES_1965__24__5_0+.

\bibitem[Gro68]{SGA2}
Alexander Grothendieck.
\newblock {\em Cohomologie locale des faisceaux coh\'erents et th\'eor\`emes de
  {L}efschetz locaux et globaux {$(SGA$} {$2)$}}.
\newblock North-Holland Publishing Co., Amsterdam, 1968.
\newblock Augment\'e d'un expos\'e par Mich\`ele Raynaud, S\'eminaire de
  G\'eom\'etrie Alg\'ebrique du Bois-Marie, 1962, Advanced Studies in Pure
  Mathematics, Vol. 2.

\bibitem[Gro03]{SGA1}
A.~Grothendieck.
\newblock {\em Rev\^etements \'etales et groupe fondamental ({SGA} 1)}.
\newblock Documents Math\'ematiques (Paris) [Mathematical Documents (Paris)],
  3. Soci\'et\'e Math\'ematique de France, Paris, 2003.
\newblock S\'eminaire de g\'eom\'etrie alg\'ebrique du Bois Marie 1960--61.
  [Geometric Algebra Seminar of Bois Marie 1960-61], Directed by A.
  Grothendieck, With two papers by M. Raynaud, Updated and annotated reprint of
  the 1971 original [Lecture Notes in Math., 224, Springer, Berlin; MR0354651
  (50 \#7129)].

\bibitem[Har77]{H}
Robin Hartshorne.
\newblock {\em Algebraic geometry}.
\newblock Springer-Verlag, New York, 1977.
\newblock Graduate Texts in Mathematics, No. 52.

\bibitem[KMM87]{KaMaMa}
Yujiro Kawamata, Katsumi Matsuda, and Kenji Matsuki.
\newblock Introduction to the minimal model problem.
\newblock In {\em Algebraic geometry, {S}endai, 1985}, volume~10 of {\em Adv.
  Stud. Pure Math.}, pages 283--360. North-Holland, Amsterdam, 1987.

\bibitem[SGA73]{SGA7II}
{\em Groupes de monodromie en g\'eom\'etrie alg\'ebrique. {II}}.
\newblock Lecture Notes in Mathematics, Vol. 340. Springer-Verlag, Berlin-New
  York, 1973.
\newblock S\'eminaire de G\'eom\'etrie Alg\'ebrique du Bois-Marie 1967--1969
  (SGA 7 II), Dirig\'e par P. Deligne et N. Katz.

\bibitem[Zhu]{Zhu}
Yi~Zhu.
\newblock Homogeneous space fibrations over surfaces.
\newblock to appear, J. Inst. Math. Jussieu.

\end{thebibliography}
\bibliographystyle{alpha}

\end{document}